\theoremstyle{plain}						% Default
\newtheorem{theorem}{Theorem}[section]
\newtheorem{lemma}[theorem]{Lemma}
\numberwithin{equation}{section}
\titleformat{\section}[block]{\normalfont\bfseries}{\thesection.}{0.5em}{}
\titlespacing{\section}{0pc}{1pc}{1pc}
\titleformat{\subsection}[block]{\normalfont\bfseries}{\thesubsection.}{0.5em}{}
\titlespacing{\subsection}{0pc}{1pc}{1pc}
\begin{document}
\title{Control Strategies for Transport Networks under Demand Uncertainty}
\author{Simone G\"ottlich\footnotemark[1], \; Thomas Schillinger\footnotemark[1]}

\footnotetext[1]{University of Mannheim, Department of Mathematics, 68131 Mannheim, Germany (goettlich@uni-mannheim.de, schillinger@uni-mannheim.de)}

\date{ \today }

\maketitle

\begin{abstract}
\noindent
In this article, we consider transport networks with uncertain demands. 
Network dynamics are given by linear hyperbolic partial differential equations and suitable coupling conditions, while demands are incorporated as solutions to stochastic differential equations. 
For the demand satisfaction, we solve a constrained optimal control problem. Controls in terms of network inputs are then calculated explicitly for different assumptions. Numerical simulations are performed to underline the theoretical results.
\end{abstract}

{\bf AMS Classification.} 93E20, 65C20, 60H10  	 

{\bf Keywords.} Optimal control, stochastic processes, transport equations 

%%%%%%%%%%%%%%%%%%%%%%%%%%%%%%%%%%%%%%%%%%%%

\section{Introduction}
Transport networks play an important role for the description of flows between entities, see~\cite{Ahuja1993,Bressan2014} for an overview. Typical examples include road networks, pipeline networks, power grids or production lines. Such networks ensure that there is light in a room after pressing the light switch or that the production line works efficiently and without larger idle times in an automobile plant. Especially, if uncertainty comes into play, there is a strong interest to analyze the perturbed systems in order to understand how the stochasticity influences the dynamics.

There are different approaches such as classical network flows~\cite{Ahuja1993,Ford1962} or dynamic transport networks~\cite{Bressan2014,Apice,Garavello2016,Koch2015} to describe the deterministic system dynamics. The approaches mainly differ in static respectively dynamic considerations of flows and associated effects. In the case of supply networks (e.g. energy or production) hyperbolic partial differential equations (PDEs) have been established \cite{Banda2006,Gas1,Gottlich2005,Strom1,Leugering2002} using appropriate coupling conditions at nodes. For production lines there also exist network approaches on different scales, i.e. from a microscopic perspective using queuing theory \cite{ProductionQueue} or discrete event systems \cite{DiscreteEvent} up to a macroscopic perspective governed by PDEs of hyperbolic type, as shown in~\cite{ADMProduction}.

In this work, we focus on macroscopic models using densities as the quantity of interest and model the dynamics on each arc in the network by linear hyperbolic partial differential equations. The optimal control problem under consideration is to find the optimal input into the system such that stochastic demands are satisfied. The latter are represented by solutions to stochastic differential equations (SDEs). In contrast to~\cite{Lux2021,Lux1}, where the setting of an Ornstein-Uhlenbeck process has been considered, we focus on a different stochastic process, i.e. the Jacobi process, to model the demand. Originally, the Jacobi process was used to determine interest rates on financial markets \cite{Delbaen}. Recently, the Jacobi process has been also applied in the case of electricity markets to either model electricity prices~\cite{Filipovic} or to investigate intraday electricity demand~\cite{Korn}.

For the identification of the optimal control we pursue an explicit representation under suitable assumptions on the network dynamics. In the special case of the linear advection equation on a single line only, similar investigations have been presented in \cite{Lux1}. In this work, we allow for a wider class of linear hyperbolic PDEs where additional complexity for the computation of the control arises due to the network structure. Depending on different levels of information we distinguish between a non-updated scenario (MS1), a scenario where we update the control with a given frequency (MS2) and a scenario in which we additionally update the conditions at the nodes (MS3). All scenarios are analyzed in detail and also studied from a numerical point of view.

The paper is organized as follows: Section $\ref{descr}$ introduces the optimal control model. Afterwards, we investigate a Jacobi process with time-dependent mean reversion level in Section $\ref{cha2}$. A deeper insight into the network dynamics and the explicit computation of the optimal inflow is presented in Section $\ref{cha4}$. Section $\ref{cha.3}$ deals with the discussion of the different levels of information. Finally, in Section $\ref{cha.num}$ we present a numerical study for the network model and also give a comparison to the Ornstein-Uhlenbeck demand process.

%%%%%%%%%%%%%%%%%%%%%%%%%%%%%%%%%
\section{Model description}
\label{descr}
%%%%%%%%%%%%%%%%%%%%%%%%%%%%%%%%%
The considered model consists of three parts as can be seen in Figure $\ref{SchematicD}$.  
First, the quantity of interest is the time-dependent control variable $u(t)$ describing the inflow into the network. The control variable will be chosen such that stochastic demands are satisfied. Second, we face a transport network consisting of a single source and a finite number of internal nodes $J$ and end nodes $C$. We assume that networks are connected, directed and tree-structured graphs. The dynamics for the densities $z^{(i)}(x,t)$ on network arc $i$ are governed by a linear hyperbolic partial differential equation equipped with initial conditions $z^{(i)}_0(x)$. The functions $f^{(i)}$ and $g^{(i)}$ denote the flux function and a damping function for arc $i$, respectively. In Section $\ref{cha4}$, we discuss the different choices. 
Due to the network structure we denote by $v_i$ the subsequent node of arc $i$.
For a fixed node $v_i$ we have to impose distribution parameters $\alpha_{i,k}(t)$ describing the share of the flux directed from arc $i$ to arc $k$ at time $t$. We require that the $\alpha_{i,k}(t)$ sum up to 1 for any fixed node $v_i$ to ensure flux conservation. The choice of the distribution parameters will be extensively discussed in Section  $\ref{cha.3}$.
Third, at the sinks of the transport network we assume uncertain demands modeled via stochastic processes $(D^{(v_i)}_t)_{t\in[t_0,T]}$. In Section $\ref{cha2}$, we theoretically investigate the Jacobi process. A numerical comparison to an Ornstein-Uhlenbeck process in Section $\ref{cha.num}$ highlights the key differences. 

\vspace{0.2cm}
\tikzset{
  circ/.style={
    circle,
    fill=white!10,
    draw=black
  }
}

\begin{figure}[htb]
\fcolorbox{black}{white}{
\begin{minipage}[t]{0.23\textwidth}
\centering Source node $v_0$\\ 
\vspace{0.5cm}
\Huge \resizebox{1.5cm}{1.4cm}{\textcolor{teal}{\textbf{?}}}
\vspace{0.5cm}
\normalsize
\centering \\Inflow control $u(t)$
\end{minipage}
\begin{minipage}[t]{0.025\textwidth}
\vspace{1.3cm}
{\Large $\longrightarrow$}
\end{minipage}
\begin{minipage}[t]{0.33\textwidth}
\centering Transport network\\
\resizebox{4.1cm}{2.4cm}{
\begin{tikzpicture}[thick]
\node[circ] (Q1){$v_1$};
\node[circ,left=0.6 of Q1] (Q0){$v_0$};
\node[circ, above right =1 of Q1] (Q2){$v_2$};
\node[circ, right =0.7 of Q2] (Q4){$v_4$};
\node[circ, below = 1.4 of Q2] (Q3) {$v_3$};
\node[circ, right = 0.7 of Q3] (Q5) {$v_5$};
\node[circ, above right = 0.9 of Q3] (Q6) {$v_6$};

\node[draw=none,fill=none, above right = 0.45 of Q4] (Q7) {};
\node[draw=none,fill=none, left = 5.3 of Q7] (Q8) {};
\node[draw=none,fill=none, below = 3.2 of Q8](Q9) {};
\node[draw=none,fill=none, below =3.2 of Q7](Q10){};

\path[->]
(Q0) edge (Q1)
(Q1) edge (Q2)
(Q2) edge (Q4)
(Q1) edge (Q3)
(Q3) edge (Q5)
(Q3) edge (Q6);
\path[]
(Q7) edge (Q8)
(Q8) edge (Q9)
(Q9) edge (Q10)
(Q10) edge (Q7);
\end{tikzpicture}
}
\centering Hyperbolic PDEs for densities $z^{(i)}(x,t)$ 
\end{minipage}
\begin{minipage}[t]{0.025\textwidth}
\vspace{0.6cm}
{\Large $\longrightarrow$}\\
\vspace{-0.3cm}
{\Large $\longrightarrow$}\\
\vspace{-0.25cm}
{\Large $\longrightarrow$}
\end{minipage}
\begin{minipage}[t]{0.28\textwidth}
\centering Demand uncertainty\\
    \centering
    \includegraphics[width=3.3cm]{./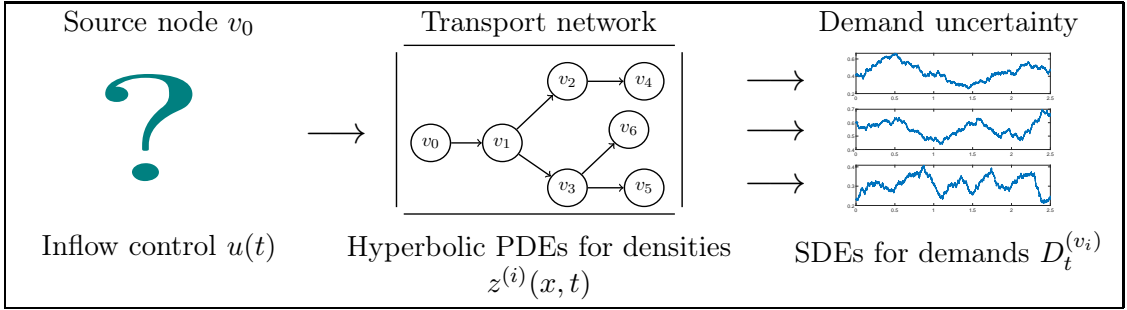}\\
\centering SDEs for demands $D^{(v_i)}_t$
\end{minipage}}
\caption{Schematic presentation of the model components.}
\label{SchematicD}
\end{figure}

%\subsection{Stochastic optimal control problem}
To control the inflow into the network $f^{(1)}(z^{(1)}(0,t),t)$, we aim to minimize the expected quadratic deviation of the demand from the actual supply $f^{(i)}(z^{(i)}(1,t),t)$. This leads to a demand tracking type cost function and hence to the following stochastic optimal control problem: 
Consider a finite time horizon $T$ and a network where all arcs have length $1$. The demand processes $(D^{(v_i)}_t)_{t \in [t_0,T]}, ~ v_i\in C$ are defined on a probability space $(\Omega, \mathcal{A}, P)$ which are equipped with a family of filtrations $(\mathcal{F}_t)_{t \in [t_0,T]}$. Let $\hat{t}\leq t_0$ be the time from which the latest demand information is available. Then, the problems reads for all $x\in (0,1)$ and $t \in [t_0,T]$: 
\begin{subequations}\label{ControlProblem11}
\begin{align}
    \underset{u \in L^2}{\min} & \sum_{\lbrace i: v_i \in C \rbrace} \int_{t_0}^T \mathbb{E}\left[\left(D_s^{(v_i)} - f^{(i)}(z^{(i)}(1,s),s)\right)^2 ~\Big|~ \mathcal{F}_{\hat{t}}\right] ds  \label{ObjectiveFunc}\\
\text{s.t.}~~ & z_t^{(i)}(x,t) + f^{(i)}(z^{(i)}(x,t),t)_x + g^{(i)}(z^{(i)}(x,t),t) = 0, ~~~~ \forall i \text{ s.t. } v_i\in J \cup C \label{NetworkDynamics}\\
        & z^{(i)}(x,t_0) = z_0^{(i)}(x), ~~~~ \forall i \text{ s.t. } v_i \in J \cup C \label{initialData}\\
        & f^{(1)}(z^{(1)}(0,t),t) = u(t) \label{boundaryData}\\
        &f^{(k)}(z^{(k)}(0,t),t) =  \alpha_{i,k}(t)f^{(i)}(z^{(i)}(1,t),t), ~~~~ \forall i \text{ s.t. } v_i \in J,~ k\text{ outgoing arc of }v_i \label{fluxCons1}\\
        & \sum_{k \text{ outgoing arc of } v_i} \alpha_{i,k}(t) = 1,~~~ \forall i \text{ s.t. } v_i \in J\label{fluxCons2}\\
        & dD_t^{(v_i)} = \kappa^{(v_i)} \left(\theta^{(v_i)}(t) - D_t^{(v_i)}\right) + \sigma^{(v_i)} \sqrt{D_t^{(v_i)}\left(1-D_t^{(v_i)}\right)} dW_t^{(v_i)},~D_0^{(v_i)} = d_0^{(v_i)}, v_i \in C \label{Demand}
\end{align}
\end{subequations}
In the following section, we address the demand modeling via the Jacobi process in (\ref{Demand}). Following the ideas presented in~\cite{Lux1}, we intend to reformulate the cost function and then explicitly compute the control $u(t)$ depending on the dynamics of the hyperbolic PDE.

\section{Demand modeling using a Jacobi process}
\label{cha2}
Due to the inherent stochastic nature of demand, there exist various approaches to capture the behavior. However, in the course of the day or a year, an underlying pattern can be observed. For example, considering gas or electricity consumption, the demand is larger in the mornings or afternoons than at night. 
In most applications only positive demands occur. This applies to production systems but also to gas, water and electricity networks. It also seems reasonable to assume that there exist a maximum demand. 
Therefore, we require a demand process that follows a mean level and only takes values in a bounded interval. 

A stochastic process that captures most of these characteristics is the Jacobi process given by the solution to the following stochastic differential equation (SDE)
\begin{align}
\label{Jacobi1}
    dZ_t &= \kappa (\theta - Z_t)dt + \sigma\sqrt{Z_t(1-Z_t)}dW_t , ~~~~ Z_{t_0} = z_0,
\end{align}
where $\kappa,~ \sigma>0$ and $\theta, ~z_0 \in \mathbb{R}$ are parameters and $(W_t)_{t \in [t_0,T]}$ is a Brownian motion. The equation consists of a deterministic drift term $\kappa(\theta - Z_t)dt$ which pushes the process back to the mean reversion level $\theta$, where $\kappa$ determines how fast the mean reversion occurs. The second part is the stochastic diffusion $\sigma\sqrt{Z_t(1-Z_t)}dW_t$ which is mainly influenced by the Brownian motion $W_t$. The scale of this stochastic influence is governed by $\sigma$ and by the distance of the Jacobi process to the border values, which are 0 and 1 in this case. The closer the process approaches one of the borders, the smaller gets the stochastic influence of the diffusion part. 
The Jacobi process admits for the Markov property and belongs to the wide class of Pearson diffusion processes consisting of a deterministic drift term and a stochastic diffusion term. Other members of this class, as for instance the Ornstein-Uhlenbeck process or the CIR-Process, have been used to model uncertain demands in various applications \cite{Dohi, Lux1, Lux2, Ouaret}. The Ornstein-Uhlenbeck process is given by the solution to the SDE
\begin{align}
\label{OU-Process}
    d\hat{Z}_t = \hat{\kappa}(\hat{\theta}(t) - \hat{Z}_t)dt + \hat{\sigma}dW_t,~~~ \hat{Z}_{t_0} = \hat{z}_0,
\end{align}
where $(W_t)_{t \in [t_0,T]}$ is a Brownian motion, $\hat{z}_0$ an initial demand and $\hat{\kappa}, \hat{\sigma}$ are positive constants. The function $\hat{\theta}$ models the time-dependent mean reversion level of the Ornstein-Uhlenbeck process. For application purposes they appear very popular since they are easy to handle, admit for an explicit solution and allow for the most basic properties. However, the Ornstein-Uhlenbeck process as well as the CIR-Process do not allow for bounds or only for bounds from below. Therefore, and supported by the recent work \cite{Korn} where a parameter fitting for a Jacobi process in an electricity market has been provided, we concentrate on the Jacobi process and come back to the Ornstein-Uhlenbeck process in Section $\ref{cha.num}$.

The Jacobi process given by the solution of $(\ref{Jacobi1})$ is bounded and stays in the interval $[0,1]$. By a linear transformation the Jacobi process can be shifted to any bounded interval $[\tilde{\alpha},\tilde{\beta}]$:
\begin{align*}
    \tilde{Z}_t = \tilde{\alpha} + (\tilde{\beta}-\tilde{\alpha})Z_t .
\end{align*}
Therefore, and to keep notation simple, in the following we restrict the Jacobi process to $[0,1]$. 

For the Jacobi process in $(\ref{Jacobi1})$ the transition probability for moving from state $x$ at time $s$ to $y$ at time $t$ is given by 
\begin{align*}
  p(x,s;y,t) &= \sum_{n=0}^\infty k_n \psi_n(x) \psi_n(y)w(y)e^{-\eta_n(t-s)},  
\end{align*}
where
\begin{eqnarray*}
 &   k_n = \frac{(a+b+2n-1)\Gamma(a+n)\Gamma(a+b+n-1)}{n!\Gamma(a)^2\Gamma(b+n)},\\ [1ex]
    & w(x) = x^{a-1}(1-x)^{b-1}, \quad
    \psi_n(x) = \sum_{k=0}^n (-1)^k \binom{n}{k}\frac{\Gamma(a+b+n-1+k)\Gamma(a)}{\Gamma(a+b+n-1)\Gamma(a+k)}x^k 
    \end{eqnarray*}
    with $\Gamma(\cdot)$ being the Gamma function and
    \begin{align*}
    a = \frac{2\kappa \theta}{\sigma^2}>0, \quad
 b = \frac{2\kappa(1-\theta)}{\sigma^2},\quad
 \eta_n = \kappa n + \frac{\sigma^2}{2}n(n-1).
        \end{align*}
Using the transition probabilities the conditional expectation for the Jacobi process and $t_0<t$ can be calculated by
\begin{align}
\label{condfirstMoment}
    \mathbb{E}[Z_t | Z_{t_0} = z_0] = \theta + (z_0 - \theta) e^{-\kappa(t-t_0)}.
\end{align}
The conditional second moment is given by
\begin{align}
\begin{split}
    \label{condSecMoment}
    \mathbb{E}[Z_t^2 | Z_{t_0} = z_0] =& \frac{(2 \kappa \theta + \sigma^2)\theta}{2\kappa + \sigma^2}  + \frac{2\kappa \theta + \sigma^2}{\kappa + \sigma^2}(z_0-\theta)e^{-\kappa(t-t_0)} \\
    &~~~ + \left(z_0^2 - \frac{2\kappa \theta + \sigma^2}{\kappa + \sigma^2}z_0 + \frac{\kappa \theta (2\kappa + \sigma^2)}{(2\kappa + \sigma^2)(\kappa + \sigma^2)}  \right)e^{-(2\kappa + \sigma^2)(t-t_0)}.
\end{split}
\end{align}
Both derivations are well known and can be found in e.g. \cite{Delbaen}. For a deeper discussion of the Jacobi process we refer to \cite{JacobiG}. The calculation of the first two moments of the Jacobi process will enable us at the end of this section to find a deterministic representation of the a priori stochastic optimal control problem (\ref{ControlProblem11}).

\subsection{A Jacobi process with time-dependent mean reversion level}
So far, the mean reversion level has been assumed to be constant. However, in many applications the averaged demand is not constant in time. Considering electricity demands or the demand of manufactured goods, there are predictable fluctuations within a day or also within a year. So we extend $(\ref{Jacobi1})$ by adding a time dependency into the mean reversion level
\begin{align}
\label{Jacobi2}
    dZ_t &= \kappa (\theta(t) - Z_t)dt + \sigma\sqrt{Z_t(1-Z_t)}dW_t, ~~~~ Z_{t_0}=z_0.
\end{align}
The parameter $\theta(t)$ is now considered as a function in time. Existence and uniqueness of solutions to $(\ref{Jacobi2})$ can be guaranteed under similar assumptions as for $(\ref{Jacobi1})$ finding appropriate estimates for $\theta$, see \cite{Sorensen}.

Unfortunately, there is no explicit distribution of solution to neither $(\ref{Jacobi1})$ nor $(\ref{Jacobi2})$. However, it is possible to calculate a limit distribution for $t \rightarrow \infty$ in which the solution to $\eqref{Jacobi1}$ is distributed as a Beta distribution with parameters $\alpha = \frac{2\kappa \theta}{\sigma^2}$ and $\beta = \frac{2\kappa (1-\theta)}{\sigma^2}$, see \cite{JacobiG}.

For an illustration of the Jacobi processes, we plot the evolution with and without time-dependent mean reversion levels and different parameters in Figure $\ref{ExamplesJacobi}$. The simulation is performed using a truncated Euler-Maruyama scheme and will be further explained in Section $\ref{cha.num}$.

For a constant mean-reversion level the processes fluctuate around $\theta$ where the oscillations increase for larger $\sigma$ and smaller $\kappa$. It can be observed that especially if $\sigma$ is large or if $\kappa$ is small, the Jacobi process approaches its boundaries. In Figure $\ref{ExamplesJacobi}$ (c) and (d) the processes follow the mean-reversion level with a small delay, except for the representation where $\sigma =1.8$ is chosen. The stochastic influence dominates the mean reverting behaviour and therefore shows larger deviations from the mean-reversion level.

To proceed with the transport network analysis, we are interested in finding the first two conditional moments for $(\ref{Jacobi2})$. Therefore, we note that the conditional expectation of the Jacobi process can be rewritten by
\begin{align*}
    \mathbb{E}\left[Z_t | Z_{t_0} = z_0\right] &= \mathbb{E}\left[Z_{t_0} + \int_{t_0}^t dZ_s | Z_{t_0} = z_0\right] \\
    &= z_0 +  \mathbb{E}\left[\int_{t_0}^t \kappa(\theta(s) - Z_s)ds | Z_{t_0} = z_0\right] + \mathbb{E}\left[\int_{t_0}^t \sqrt{Z_s(1-Z_s)}dW_s | Z_{t_0} = z_0\right].
\end{align*}
Since
\begin{align*}
    \mathbb{E}\left[\int_{t_0}^t \left(\sqrt{Z_s(1-Z_s)} \right)^2ds | Z_{t_0} = z_0 \right] \leq \frac{t-t_0}{4}<\infty
\end{align*}

\definecolor{mycolor1}{rgb}{0.00000,0.44700,0.74100}%
\definecolor{mycolor2}{rgb}{0.85000,0.32500,0.09800}%
\definecolor{mycolor3}{rgb}{0.92900,0.69400,0.12500}%
\definecolor{mycolor4}{rgb}{0.49400,0.18400,0.55600}%

	\begin{minipage}{0.5\textwidth}
		\centering
		\scalebox{.52}{
			% [inline block 0: 4 envs, 117301 chars -> data_tex | \begin{tikzpicture} 				...]
%
		}
		\captionof*{figure}{\small{(d) $\sigma=0.6$, $\theta(t)=0.4+0.25\sin(t)$}}
	\end{minipage}
			\captionof{figure}{Influence of different parameters on the Jacobi process with constant mean reversion level $\theta(t)=0.4$ and time-dependent mean reversion level $\theta(t)=0.4+0.25\sin(t)$.}
			\label{ExamplesJacobi}
			\vspace{0.6cm}
			
the martingale property of the Ito integral (see e.g. \cite{SDEIto:}), yields
\begin{align*}
    \mathbb{E}\left[\int_{t_0}^t \sqrt{Z_s(1-Z_s)}dW_s | Z_{t_0} = z_0\right] = 0.
\end{align*}
Therefore, it holds
\begin{align*}
    \mathbb{E}\left[Z_t | Z_{t_0} = z_0\right] = \mathbb{E}\left[\breve{Z}_t | \breve{Z}_{t_0} = z_0 \right] = \breve{Z}_t
\end{align*}
for the solution to the deterministic linear inhomogeneous initial value problem
\begin{align}
\label{lininhomODE}
    \frac{d}{dt} \breve{Z}_t = \kappa (\theta(t) - \breve{Z}_t), ~~~ \breve{Z}_{t_0} = z_0.
\end{align}
The solution to $(\ref{lininhomODE})$ is explicitly known and given by
\begin{align*}
    \breve{Z}_t = z_0 e^{-\kappa(t-t_0)} + \kappa \int_{t_0}^t e^{-\kappa(t-s)}\theta(s)ds.
\end{align*}
Summarizing, we obtain for the conditional expectation of the Jacobi process with time varying mean reversion level
\begin{align}
\label{condfirstMomentTimevar}
    \mathbb{E}[Z_t | Z_{t_0} = z_0] = z_0 e^{-\kappa(t-t_0)} + \kappa \int_{t_0}^t e^{-\kappa(s-t_0)}\theta(s)ds.
\end{align}
For the reformulation of the optimal control problem, we also require an explicit representation of the second conditional moment of the Jacobi process with time varying mean reversion level. It is given by
\begin{align}
\begin{split}
        \mathbb{E}[Z_t^2 | Z_{t_0}=z_0]  &=  \int_{t_0}^t (2\kappa \theta(s) + \sigma^2)\left(z_0e^{-\kappa(s-t_0)} + \kappa \int_{t_0}^s  \theta(r)e^{-\kappa(s-r)}dr \right)e^{-(2\kappa + \sigma^2)(t-s)}ds \\
    &~~~~+ z_0^2e^{-(2\kappa + \sigma^2)(t-t_0)}.  \label{condsecMomentTimevar}
\end{split}
\end{align}
The computational details are provided in the Appendix. In the following, the demands $D_t^{(v_i)}$ will be described by individual Jacobi processes with time-dependent mean reversion level.

\subsection{Deterministic reformulation of the optimal control problem}
After having investigated the Jacobi process properly, we can decompose the expectation in the objective function of (\ref{ControlProblem11}) in the following way:
\begin{align*}
    &\mathbb{E}\left[\left(D_t^{(v_i)} - f^{(i)}(z^{(i)}(1,t),t) \right)^2  | D_{t_0}^{(v_i)} = d_0^{(v_i)}\right] \\
    &= \mathbb{E}\left[\left(D_t^{(v_i)}\right)^2  | D_{t_0}^{(v_i)} = d_0^{(v_i)}\right] - 2 \mathbb{E}\left[D_t^{(v_i)} | D_{t_0}^{(v_i)} = d_0^{(v_i)}\right] f^{(i)}(z^{(i)}(1,t),t) + \left(f^{(i)}(z^{(i)}(1,t),t) \right)^2 .
\end{align*}
Further applying the results for the Jacobi process from $(\ref{condfirstMomentTimevar})$ and $(\ref{condsecMomentTimevar})$ we obtain for the objective function as
\begin{align}
\begin{split}
\label{detproblem}
    &\mathbb{E}\left[\left(D_t^{(v_i)} - f^{(i)}(z^{(i)}(1,t),t) \right)^2  | D_{t_0}^{(v_i)} = d_0^{(v_i)}\right] \\
    =& \int_{t_0}^t \left(2\kappa^{(v_i)} \theta^{(v_i)}(s) + \left(\sigma^{(v_i)}\right)^2\right)\bigg(d^{(v_i)}_0e^{-\kappa^{(v_i)}(s-t_0)}  \kappa^{v(i)} \int_{t_0}^s  \theta^{(v_i)}(r)e^{-\kappa^{(v_i)}(s-r)}dr \bigg)\\
    &~\cdot e^{-(2\kappa^{(v_i)} + \left(\sigma^{(v_i)}\right)^2)(t-s)}- 2f^{(i)}(z^{(i)}(1,t),t) \kappa^{(v_i)} e^{-\kappa^{(v_i)}(s-t_0)} \theta^{(v_i)}(s) ds\\
    &~+ \left(d_0^{(v_i)}\right)^2e^{-(2\kappa^{(v_i)} + \left(\sigma^{(v_i)}\right)^2)(t-t_0)} - 2f^{(i)}(z^{(i)}(1,t),t)d_0^{(v_i)} e^{-\kappa^{(v_i)}(t-t_0)} + \left(f^{(i)}(z^{(i)}(1,t),t)\right)^2.
\end{split}
\end{align}
This contains exclusively deterministic and known variables and enables us to write the optimal control problem (\ref{ControlProblem11}) without the stochastic demand constraint by using the deterministic explicit representations of the first two conditional moments of the Jacobi demand processes.
Consequently, the minimization problem 
\begin{subequations} \label{ControlProblem111}
\begin{align}
   \underset{u \in L^2}{\min} & \sum_{\lbrace i: v_i \in C \rbrace} \int_{t_0}^T \mathbb{E}\left[\left(D_s^{(v_i)} - f^{(i)}(z^{(i)}(1,s),s)\right)^2 ~\Big|~ \mathcal{F}_{\hat{t}}\right] ds  \label{ofunc}\\
\text{s.t.}~~ & z_t^{(i)}(x,t) + f^{(i)}(z^{(i)}(x,t),t)_x + g^{(i)}(z^{(i)}(x,t),t) = 0, ~~~~ \forall i \text{ s.t. } v_i\in J \cup C \\
        & z^{(i)}(x,t_0) = z_0^{(i)}(x), ~~~~ \forall i \text{ s.t. } v_i \in J \cup C\\
        & f^{(1)}(z^{(1)}(0,t),t) = u(t) \\
        &f^{(k)}(z^{(k)}(0,t),t) =  \alpha_{i,k}(t)f^{(i)}(z^{(i)}(1,t),t), ~~~~ \forall i \text{ s.t. } v_i \in J,~ k\text{ outgoing arc of }v_i \\
        & \sum_{k \text{ outgoing arc of } v_i} \alpha_{i,k}(t) = 1,~~~ \forall i \text{ s.t. } v_i \in J.\\
        &D_0^{(v_i)} = d_0^{(v_i)},~ v_i \in C
\end{align}
\end{subequations}
can be solved deterministically by interpreting (\ref{ofunc}) as given in equation (\ref{detproblem}). 
The following theorem states a general result on minimizing the mean-square error.
\begin{theorem}
\label{meansquareTheorem}
Let $(\Omega, \mathcal{F}, P)$ be a complete probability space and let $\mathcal{G} \subset \mathcal{F}$ a sub-$\sigma$-algebra of $\mathcal{F}$. Let $X,Y$ be two real-valued and square integrable random variables on $\Omega$ where $Y$ is $\mathcal{G}$-measurable. Then $\mathbb{E}[X|\mathcal{G}]$ is the minimizer of the mean-square distance from $X$
\begin{align*}
    \underset{Y}{\min} \left(\mathbb{E}[(X-Y)^2]  \right)
\end{align*}
for all such random variables $Y$.
\end{theorem}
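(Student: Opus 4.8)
The plan is to recognize $\mathbb{E}[X|\mathcal{G}]$ as the orthogonal projection of $X$ onto $L^2(\Omega,\mathcal{G},P)$ inside the Hilbert space $L^2(\Omega,\mathcal{F},P)$, and to establish its optimality by a direct \enquote{complete the square} computation rather than invoking the abstract projection theorem, so that the argument is self-contained.

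A preliminary step is an integrability check: I would first verify that $\mathbb{E}[X|\mathcal{G}]$ is itself square integrable, so that it is an admissible competitor and so that every product appearing below is integrable. This follows from the conditional Jensen inequality applied to the convex function $t \mapsto t^2$, which gives $\mathbb{E}[X|\mathcal{G}]^2 \le \mathbb{E}[X^2|\mathcal{G}]$ almost surely and hence, upon taking expectations, $\mathbb{E}\big[\mathbb{E}[X|\mathcal{G}]^2\big] \le \mathbb{E}[X^2] < \infty$. Thus $\mathbb{E}[X|\mathcal{G}] \in L^2(\Omega,\mathcal{G},P)$.

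The core step is the decomposition. For an arbitrary $\mathcal{G}$-measurable, square integrable $Y$ I would write
\begin{align*}
    X - Y = \big(X - \mathbb{E}[X|\mathcal{G}]\big) + \big(\mathbb{E}[X|\mathcal{G}] - Y\big),
\end{align*}
square it, take expectations, and observe that the cross term vanishes: since $\mathbb{E}[X|\mathcal{G}] - Y$ is $\mathcal{G}$-measurable and the product is integrable by Cauchy--Schwarz, conditioning on $\mathcal{G}$ and pulling out the $\mathcal{G}$-measurable factor yields
\begin{align*}
    \mathbb{E}\Big[\big(X - \mathbb{E}[X|\mathcal{G}]\big)\big(\mathbb{E}[X|\mathcal{G}] - Y\big)\Big] = \mathbb{E}\Big[\big(\mathbb{E}[X|\mathcal{G}] - Y\big)\,\mathbb{E}\big[X - \mathbb{E}[X|\mathcal{G}]\,\big|\,\mathcal{G}\big]\Big] = 0,
\end{align*}
because $\mathbb{E}\big[X - \mathbb{E}[X|\mathcal{G}]\,\big|\,\mathcal{G}\big] = 0$ almost surely. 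This leaves
\begin{align*}
    \mathbb{E}\big[(X-Y)^2\big] = \mathbb{E}\Big[\big(X - \mathbb{E}[X|\mathcal{G}]\big)^2\Big] + \mathbb{E}\Big[\big(\mathbb{E}[X|\mathcal{G}] - Y\big)^2\Big] \ge \mathbb{E}\Big[\big(X - \mathbb{E}[X|\mathcal{G}]\big)^2\Big],
\end{align*}
with equality precisely when $\mathbb{E}\big[(\mathbb{E}[X|\mathcal{G}] - Y)^2\big] = 0$, i.e.\ $Y = \mathbb{E}[X|\mathcal{G}]$ $P$-almost surely. This simultaneously gives the existence of the minimizer, its value, and its uniqueness up to a null set.

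The only genuinely delicate point is the integrability bookkeeping that licenses the manipulation of the cross term — namely knowing $\mathbb{E}[X|\mathcal{G}] \in L^2$ (via conditional Jensen) so that Cauchy--Schwarz applies and the pull-out and tower properties of conditional expectation are valid; everything else is the routine completion of the square. As an alternative one could instead quote that $L^2(\Omega,\mathcal{G},P)$ is a closed subspace of $L^2(\Omega,\mathcal{F},P)$, that $X - \mathbb{E}[X|\mathcal{G}]$ is orthogonal to it by the defining property of conditional expectation, and that the nearest element of a closed subspace to a given point is its orthogonal projection; but the explicit computation above is preferable here since it is elementary and keeps the presentation closed.
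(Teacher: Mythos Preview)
Your proof is correct and is the standard $L^2$-projection argument via the Pythagorean decomposition. The paper itself does not supply a proof of this theorem at all; it simply writes ``For a proof we refer to [Klenke]'' and moves on, so your self-contained derivation is strictly more than what the paper offers and coincides with the argument one finds in that reference.
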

For a proof we refer to \cite{Klenke}. In the setting of $(\ref{ControlProblem111})$ this means that in terms of $L^2$-minimization the outflow flux should be chosen to be the conditional expectation of the demand with respect to the available information, i.e. $\mathbb{E}[D_t^{(v_i)} | D_{\tilde{t}}^{(v_i)} = d_0^{(v_i)}]$ for some $\tilde{t} \in [t_0,T]$. In Section $\ref{cha.3}$ we will discuss the aspect of information availability more carefully while in the next section we consider the different networks dynamics given by linear hyperbolic PDEs.

\section{Network dynamics and optimal input}
\label{cha4}
In this section we focus on the dynamics in the network on all arcs $i$ given by constraint (\ref{NetworkDynamics}) of the optimization problem, i.e. the shape of the functions $f^{(i)}$ and $g^{(i)}$ governed by
\begin{align}
\label{PDEwD}
    z_t^{(i)}(x,t) + f^{(i)}(z^{(i)}(x,t),t)_x + g^{(i)}(z^{(i)}(x,t),t) = 0,
\end{align}
where $f^{(i)}$ denotes the flux function and $g^{(i)}$ is the diffusion function which leads to damping of the transported quantity in the supply system.
We restrict ourselves to linear hyperbolic partial differential equations. For simplicity, all investigations are executed in the 1-1 and 1-2 case and can be generalized to arbitrary tree networks in a straightforward way. Under some assumptions, we will be able to calculate the optimal input explicitly.

\subsection{Linear transport with time-dependent velocity function}
\label{LtTDV}
We start with the consideration of a linear transport dynamic using a time-dependent velocity function, i.e. we choose the flux function $f^{(i)}(z,t) = \lambda_i(t) z$, where $\lambda_i(\cdot)$ is a strictly positive and bounded function. In this section we assume no damping, i.e. $g^{(i)}(z,t)=0$. 
Then, the method of characteristics \cite{LeVeque} leads to a trajectory plot as illustrated in Figure $\ref{traj_timdep}$. Since the trajectories do not intersect, the dynamics are still linear. 

\begin{figure}[h!]
    \centering
    \includegraphics[width=6.3cm]{./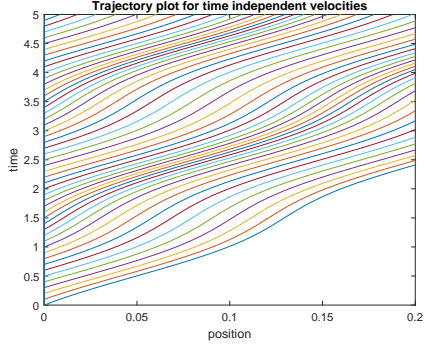}
    \caption{Trajectory plot for a time-dependent velocity function $\lambda(t) = 8 + 3\sin(\pi t)$.}
    \label{traj_timdep}
\end{figure}

\subsubsection{1-1 Network}
In the simple case of a 1-1-network we now demonstrate how the optimal input at the initial node can be determined. 
This procedure can be seen as a straightforward extension of the results in \cite{Lux1} where only one arc and only a constant velocity has been considered.
The structure of the 1-1 network is shown in  Figure $\ref{network1-1}$. 

\tikzset{
  pp/.style={
    rectangle,
    fill=orange!10,
    draw=orange
  }
}
\tikzset{
  ds/.style={
    ellipse,
    fill=blue!10,
    draw=blue
  }
}
\tikzset{
  cs/.style={
    rectangle,
    fill=green!10,
    draw=green
  }
}

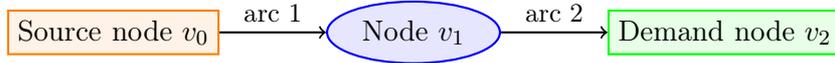
\begin{figure}[H]
\label{12network}
\begin{center}
\begin{tikzpicture}[thick]
\node[pp] (K1) {Source node $v_0$};
\node[ds,right = 1.4cm of K1] (K2) {Node $v_1$};
\node[cs,right = 1.4cm of K2] (K3) {Demand node $v_2$};

\path [->]
(K1) edge node[above]{\small{arc 1}} (K2)
(K2) edge node[above]{\small{arc 2}}(K3);

\end{tikzpicture}
\end{center}
\caption{The supply system as a 1-1-network with one source and one demand node.}
    \label{network1-1}
\end{figure}

To determine the optimal input at some time $t_{in} \in [t_0,T]$ we have to calculate when the corresponding units injected in $t_{in}$ reach node $v_1$ (denoted by $t_1$) and the time $t_2$ when they reach the demand node $v_2$. Having this information obtained, one can calculate the conditional expectation of the demand process at $t_2$. Since the conditional expectation of demand according to Theorem $\ref{meansquareTheorem}$ minimizes the expected quadratic deviation, it is a main driver for the optimal injection into the supply network. In the special case of arcs with length 1, both times $t_1$ and $t_2$ are implicitly given by the system of equations
\begin{align}
\begin{split}
\label{eq1}
     \int_{t_{in}}^{t_1} \lambda_1(r)dr = 1, \quad
    \int_{t_1}^{t_2} \lambda_2(r)dr = 1
\end{split}
\end{align}
where $\lambda_1$ and $\lambda_2$ denote the velocity functions on arc 1 and arc 2, respectively.
These equations can be generalized for arbitrary lengths of the arcs by replacing the right hand sides of $(\ref{eq1})$ by the particular lengths. We present a result that enables us to express $t_1$ and $t_2$ explicitly under certain assumptions.
\begin{lemma}
\label{Lem41}
If the velocity functions $\lambda_i$ have an antiderivative $\Lambda_i, i =1,2$ which is invertible, then for an injection time $t_{in} \in [t_0,T]$, $t_1$ and the output time $t_2$ are given by
\begin{align}
\begin{split}
\label{intermediateTimes2}
    t_1 &= \Lambda_1^{-1}(1+\Lambda_1(t_{in}))\\
    t_2 &= \Lambda_2^{-1}(1+\Lambda_2(\Lambda_1^{-1}(1+\Lambda_1(t_{in})))).
\end{split}
\end{align}
\end{lemma}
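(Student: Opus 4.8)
The plan is to read off both formulas directly from the defining equations (\ref{eq1}) by recognizing each integral as a difference of antiderivative values and then inverting.

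First I would handle $t_1$. By the fundamental theorem of calculus, $\int_{t_{in}}^{t_1}\lambda_1(r)\,dr = \Lambda_1(t_1) - \Lambda_1(t_{in})$, so the first equation in (\ref{eq1}) reads $\Lambda_1(t_1) - \Lambda_1(t_{in}) = 1$, i.e. $\Lambda_1(t_1) = 1 + \Lambda_1(t_{in})$. Since $\Lambda_1$ is invertible by hypothesis, applying $\Lambda_1^{-1}$ to both sides yields $t_1 = \Lambda_1^{-1}(1 + \Lambda_1(t_{in}))$, which is the claimed expression. I would note in passing that $\Lambda_1$ is indeed invertible as a map onto its image because $\lambda_1 > 0$ makes $\Lambda_1$ strictly increasing, so the inversion is unambiguous and $t_1$ is well-defined (and, given the boundedness assumptions on $\lambda_1$ from Section~\ref{LtTDV}, finite).

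Next I would treat $t_2$ by the identical argument applied to the second equation: $\int_{t_1}^{t_2}\lambda_2(r)\,dr = \Lambda_2(t_2) - \Lambda_2(t_1) = 1$ gives $\Lambda_2(t_2) = 1 + \Lambda_2(t_1)$, hence $t_2 = \Lambda_2^{-1}(1 + \Lambda_2(t_1))$. Substituting the expression for $t_1$ obtained in the previous step produces
\begin{align*}
    t_2 = \Lambda_2^{-1}\bigl(1 + \Lambda_2\bigl(\Lambda_1^{-1}(1 + \Lambda_1(t_{in}))\bigr)\bigr),
\end{align*}
which is exactly (\ref{intermediateTimes2}).

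Honestly, there is no real obstacle here — the lemma is essentially a change-of-variables bookkeeping statement, and the only thing worth being careful about is making explicit that strict positivity of the $\lambda_i$ guarantees the antiderivatives are strictly monotone (so the relevant branch of $\Lambda_i^{-1}$ is the natural one and the solutions $t_1, t_2$ are unique), together with a remark that $t_1, t_2 \in [t_0, T]$ for injection times $t_{in}$ not too close to $T$, which is the regime in which the control problem is posed. I would keep the write-up to a few lines, since the content is just two successive applications of the fundamental theorem of calculus followed by inversion.
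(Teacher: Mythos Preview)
Your proof is correct and follows essentially the same approach as the paper: apply the fundamental theorem of calculus to each integral condition in (\ref{eq1}) and invert. Your write-up is in fact slightly more direct than the paper's, which first combines both equations into $\Lambda_2(t_2) = \Lambda_2(t_1) + \Lambda_1(t_1) - \Lambda_1(t_{in})$ before reducing it back via $\Lambda_1(t_1)-\Lambda_1(t_{in})=1$; your added remark on strict positivity of $\lambda_i$ ensuring monotonicity (hence well-defined inverses) is a welcome point the paper leaves implicit.
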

\begin{proof}
Using the fundamental theorem of calculus and the second expression of $(\ref{eq1})$ leads to
\begin{align*}
    \Lambda_2(t_2) = \Lambda_2(t_1) + \Lambda_1(t_1) - \Lambda_1(t_{in}).
\end{align*}
The first equation in $(\ref{eq1})$ can be reformulated to express $t_1$ explicitly by
\begin{align*}
    t_1 = \Lambda_1^{-1}(1+\Lambda_1(t_{in})).
\end{align*}
Then, we have
\begin{align*}
    t_2 &= \Lambda_2^{-1}\left(\Lambda_2(t_1) + \Lambda_1(t_1)- \Lambda_1(t_{in})   \right)\\
    &= \Lambda_2^{-1}\left(\Lambda_2(t_1) + 1  \right)\\
    &= \Lambda_2^{-1}\left(1+ \Lambda_2(\Lambda_1^{-1}(1+\Lambda_1(t_{in}))) \right). \qedhere
\end{align*}
\end{proof}
Another aspect of the 1-1 network that should be understood well is the coupling condition of the two arcs in node $v_1$. In our model we require flux conservation, i.e. $f^{(1)}(z^{(1)}(1,t),t) = f^{(2)}(z^{(2)}(0,t),t)$. Since $f^{(1)}$ and $f^{(2)}$ do not necessarily have to coincide, there can be discontinuities in the densities in node $v_1$. These discontinuities have to be taken into account for the calculation of the optimal input. An outflow out of the system of $f^{(2)}(z^{(2)}(1,t_2),t_2)$ is equivalent to a density of $z^{(2)}(1,t_2) = \frac{f^{(2)}(z^{(2)}(1,t_2),t_2)}{\lambda_2(t_2)}$ at the end of arc 2. Since our dynamics are linear, it holds $z^{(2)}(1,t_2) = z^{(2)}(0,t_1)$. But as the velocity function is allowed to be time-dependent, the inflow in arc 2 at time $t_1$, given by $f^{(2)}(z^{(2)}(0,t_1),t_1) = z^{(2)}(0,t_1)\lambda_2(t_1)$, might differ from the system outflow at $t_2$. Therefore, even though the dynamics are linear, the fluxes along the characteristics can vary, while the densities stay constant. The relation between inflow in arc 2 at $t_1$ and outflow out of arc 2 at $t_2$ is given by
\begin{align*}
    \frac{f^{(2)}(z^{(2)}(0,t_1),t_1) }{\lambda_2(t_1)} = z^{(2)}(0,t_1) = z^{(2)}(1,t_2) = \frac{f^{(2)}(z^{(2)}(1,t_2),t_2)}{\lambda_2(t_2)}.
\end{align*}
Flux conservation yields that
\begin{align*}
    z^{(1)}(1,t_1) = \frac{f^{(1)}(z^{(1)}(1,t_1),t_1)}{\lambda_1(t_1)} = \frac{f^{(2)}(z^{(2)}(0,t_1),t_1)}{\lambda_1(t_1)} = f^{(2)}(z^{(2)}(1,t_2),t_2) \frac{\lambda_2(t_1)}{\lambda_1(t_1)\lambda_2(t_2)}.
\end{align*}
Additionally, using the linearity of the dynamics on arc 1 we get
\begin{align*}
    f^{(2)}(z^{(2)}(1,t_2),t_2) 
    &= z^{(1)}(1,t_1) \frac{\lambda_1(t_1)\lambda_2(t_2)}{\lambda_2(t_1)} \\
    &= z^{(1)}(0,t_{in})\frac{\lambda_1(t_1)\lambda_2(t_2)}{\lambda_2(t_1)} \\
    &= f^{(1)}(z^{(1)}(0,t_{in}),t_{in}) \frac{\lambda_1(t_1)\lambda_2(t_2)}{\lambda_1(t_{in})\lambda_2(t_1)} .
\end{align*}

Plugging in that the system outflow should be chosen according to the conditional expectation of the demand process, we obtain the following relation for the optimal system inflow
\begin{align}
\label{optinflow}
    u(t_{in}) =  \frac{\lambda_1(t_{in})}{\lambda_1(t_{1})} \frac{\lambda_2(t_1)}{\lambda_2(t_2)} \mathbb{E}\left[D_{t_2}^{(v_2)} ~ | ~ \mathcal{F}_{\hat{t}}  \right],
\end{align}
where we condition on some time $\hat{t} \in [t_0,t_{in}]$. 
The result can inductively be adapted to 1-1 networks of larger size by multiplying additional factors $\frac{\lambda_i(t_{i-1})}{\lambda_i(t_{i})}$ on the right hand side of $(\ref{optinflow})$ for any additional node $v_i$.

Note that we assume that our system has unlimited capacity, meaning that there is no upper bound on the densities on the network arcs. This assumption avoids that our system reaches a congested state as it has been investigated in \cite{Festa}. 

\subsubsection{1-2 Network}
\label{1-2noDamp}
Since we concentrate on tree networks, we explain next the procedure for the 1-2 case. This network consists of four nodes and three arcs and is presented in Figure $\ref{network1-2}$.
\begin{figure}[h]
\label{12network1}
\begin{center}
\begin{tikzpicture}[thick]
\node[pp] (K1) {Source node $v_0$};
\node[ds,below = 0.8cm of K1] (K2) {Node $v_1$};
\node[cs,below left = 0.8cm of K2] (K3) {Demand node $v_2$};
\node[cs, below right = 0.8cm of K2] (K4) {Demand node $v_3$};

\path [->]
(K1) edge node[right]{\small{arc 1}} (K2)
(K2) edge node[above left]{\small{arc 2}}(K3)
(K2) edge node[above right]{\small{arc 3}}(K4);
\end{tikzpicture}
\end{center}
\caption{The supply system as a 1-2-network with one source and two demand nodes.}
    \label{network1-2}
\end{figure}
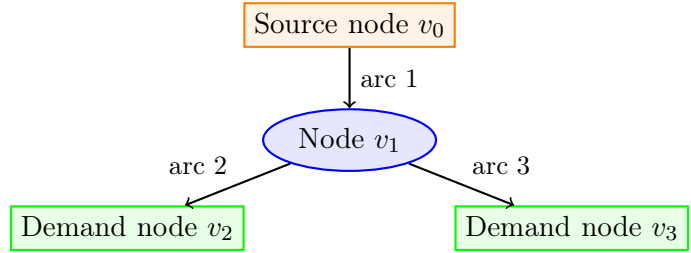

Similar to the 1-1 case, under the assumptions of Lemma $\ref{Lem41}$, i.e. the existence and invertibility of the antiderivatives $\Lambda_i$ of $\lambda_i,~i=1,2,3$, we are able to calculate the times $t_i, ~i=1,2,3$ which are the times when a unit injected at $v_0$ at $t_{in}$ reaches node $v_i$ as 
\begin{align}
\label{intermediateTimes}
\begin{split}
    t_1 &= \Lambda_1^{-1}\left(1+\Lambda_1(t_{in})\right)\\
    t_2 &= \Lambda_2^{-1}\left(1+\Lambda_2(\Lambda_1^{-1}(1+\Lambda_1(t_{in})))\right)\\
    t_3 &= \Lambda_3^{-1}\left(1+\Lambda_3(\Lambda_1^{-1}(1+\Lambda_1(t_{in})))\right).
\end{split}
\end{align}
To calculate the optimal inflow for one of the demand nodes, it is sufficient to consider a subnetwork that contains only the relevant nodes and arcs for the particular demand node. Due to the tree network structure, these subnetworks are 1-1 networks. Hence, following the discussion for the 1-1 network, the optimal input $u_i(t_{in})$ for a single demand node $v_i$ is given by 
\begin{align*}
    u_2(t_{in}) &=  \frac{\lambda_1(t_{in})}{\lambda_1(t_{1})} \frac{\lambda_2(t_1)}{\lambda_2(t_2)} \mathbb{E}\left[D_{t_2}^{(v_2)} ~ | ~ \mathcal{F}_{\hat{t}}  \right]\\
    u_3(t_{in}) &=  \frac{\lambda_1(t_{in})}{\lambda_1(t_{1})} \frac{\lambda_3(t_1)}{\lambda_3(t_3)} \mathbb{E}\left[D_{t_3}^{(v_3)} ~ | ~ \mathcal{F}_{\hat{t}}  \right],
\end{align*}
where $\hat{t} \in [t_0,t_{in}]$. The total inflow $u(t_{in})$ is then given by summing up the individual inflow shares, i.e. $u(t_{in}) = u_2(t_{in}) + u_3(t_{in})$. Again, choosing the velocity functions $\lambda_i$ constant, we end up in the special case of linear advection on networks. The approach can be easily extended to larger arbitrary tree networks by calculating the individual inflow shares of the demand nodes which are basically larger 1-1 networks. The overall inflow results in the sum of the individual inflow shares.

So far, we only consider the optimal inflow using backward calculation. The discussion on the distribution parameters at the nodes for the forward calculation starting with the inflow will be postponed to Section $\ref{cha.3}$ because it might be additionally dependent on the degree of information that is provided by the system.

\subsection{Linear transport with time-dependent velocity and damping function}
In this section we extend the framework by an additional damping term $g^{(i)}(z,t) = \mu_i(t)z$, where $\mu_i \in L^1([t_0,T])$ is chosen to be a non-negative function. The flux function on arc $i$ is chosen as before $f^{(i)}(z,t) = \lambda_i(t) z$ with $\lambda_i(\cdot)$ strictly positive. The damping reflects a loss in the transported quantity over time, which may be due to some physical property as for instance friction or electrical resistance. We allow to have an individual shape of the damping function for any arc which may additional be dependent on the time. 

\subsubsection{1-1 Network}
\label{1-1damp}
For a 1-1 network we show how to calculate the optimal input at the initial node taking the additional damping into consideration. We work with the network from Figure $\ref{network1-1}$ and use the same notation as before, meaning $t_2$ is the time in which a unit injected at $t_{in}$ reaches the demand node $v_2$ and $t_1$ the time when it reaches node $v_1$. Under the assumptions of Lemma $\ref{Lem41}$, the values for $t_1,t_2$ are given by $(\ref{intermediateTimes2})$ and are not influenced by the damping term. 

To incorporate the damping into the calculation of the optimal input, we consider again the characteristics from the PDE in equation $(\ref{PDEwD})$ with $g(z^{(i)},t)= 0$, i.e. the situation without damping, for an arc $i$ and denote them by $(x(t),t)$. Along this curve the density $z^{(i)}$ stays constant, meaning that $\frac{d}{dt} z^{(i)}(x(t),t) = 0$. If we now add a damping term, the density is reduced proportional to $\mu_i(t)z^{(i)}(x(t),t)$ which can be formulated by the ordinary differential equation
\begin{align*}
    \frac{d}{dt}z^{(i)}(x(t),t) = - \mu_i(t) z^{(i)}(x(t),t).
\end{align*}
Adding the injection information into the arc at time $t_{i-1}$ the initial value problem
\begin{align*}
    \frac{d}{dt}z^{(i)}(x(t),t) = - \mu_i(t) z^{(i)}(x(t),t),~~~ z^{(i)}(x(t_{i-1}),t_{i-1}) = z_{t_{i-1}}
\end{align*}
has the unique solution
\begin{align}
\label{dampODE}
    z(x(t),t) = z_{t_{i-1}}e^{-\int_{t_{i-1}}^t \mu_i(s)ds}.
\end{align}
The integral in the exponential function stays bounded since $\mu_i \in L^1([t_0,T])$.
Applying this result to the 1-1 case the initial injection at node $v_0$ has to be scaled up such that after the damping on both arcs the optimal flux reaches the demand node. 
Backward calculating the inflow-outflow relation, using equation $(\ref{dampODE})$ and the previous assumption on the flux conservation at the nodes, we get 
\begin{align*}
    f^{(2)}(z^{(2)}(1,t_2),t_2) &= z^{(2)}(1,t_2) \lambda_2(t_2) = z^{(2)}(0,t_1)e^{-\int_{t_1}^{t_2} \mu_2(s)ds} \lambda_2(t_2)\\
    &= f^{(2)}(z^{(2)}(0,t_1),t_1) \frac{\lambda_2(t_2)}{\lambda_2(t_1)} e^{-\int_{t_1}^{t_2}\mu_2(s)ds} \\
    &= f^{(1)}(z^{(1)}(1,t_1),t_1) \frac{\lambda_2(t_2)}{\lambda_2(t_1)} e^{-\int_{t_1}^{t_2}\mu_2(s)ds}\\
    &= z^{(1)}(1,t_1) \frac{\lambda_1(t_1)\lambda_2(t_2)}{\lambda_2(t_1)} e^{-\int_{t_1}^{t_2}\mu_2(s)ds} \\
    &= z^{(1)}(0,t_{in})e^{-\int_{t_{in}}^{t_1} \mu_1(s)ds} \frac{\lambda_1(t_1)\lambda_2(t_2)}{\lambda_2(t_1)} e^{-\int_{t_1}^{t_2}\mu_2(s)ds} \\
    &= f^{(1)}(z^{(1)}(0,t_{in}),t_{in}) \frac{\lambda_1(t_1)\lambda_2(t_2)}{\lambda_1(t_{in})\lambda_2(t_1)} e^{-\int_{t_{in}}^{t_1} \mu_1(s)ds} e^{-\int_{t_1}^{t_2}\mu_2(s)ds}.
\end{align*}

Since the outflow should match the conditional expectation of the demand, we find the optimal inflow at node $v_0$ by
\begin{align}
\label{1-2infl}
    u(t_{in}) =  \frac{\lambda_1(t_{in})}{\lambda_1(t_{1})} \frac{\lambda_2(t_1)}{\lambda_2(t_2)} \mathbb{E}\left[D_{t_2}^{(v_2)} ~ | ~ \mathcal{F}_{\hat{t}}  \right] e^{\int_{t_{in}}^{t_1} \mu_1(s) ds}  e^{\int_{t_1}^{t_2} \mu_2(s) ds}.
\end{align}

The solution procedure can iteratively be extended to larger 1-1 networks by multiplying factors of the type $\frac{\lambda_i(t_{i-1})}{\lambda_i(t_i)}e^{\int_{t_{i-1}}^{t_i}\mu_i(s)ds}$ to the right hand side of $(\ref{1-2infl})$.

\subsubsection{1-2 Network}
Finally, we transfer the results from the 1-1 network to the 1-2 network from Figure $\ref{network1-2}$ combining the techniques from Section $\ref{1-2noDamp}$ and $\ref{1-1damp}$. Assuming that the conditions in Lemma $\ref{Lem41}$ hold true the times $t_i, ~i=1,2,3$ are given as in equation $(\ref{intermediateTimes})$. For both of the demand nodes, we calculate the optimal injection backwards focusing on the relevant subnetworks which reduce to two 1-1 networks. Equation $(\ref{1-2infl})$ then leads to the individual optimal inflows of
\begin{align*}
    u_2(t_{in}) &=  \frac{\lambda_1(t_{in})}{\lambda_1(t_{1})} \frac{\lambda_2(t_1)}{\lambda_2(t_2)} \mathbb{E}\left[D_{t_2}^{(v_2)} ~ | ~ \mathcal{F}_{\hat{t}}  \right]e^{\int_{t_{in}}^{t_1} \mu_1(s) ds} e^{\int_{t_1}^{t_2} \mu_2(s) ds}\\
    u_3(t_{in}) &=  \frac{\lambda_1(t_{in})}{\lambda_1(t_{1})} \frac{\lambda_3(t_1)}{\lambda_3(t_3)} \mathbb{E}\left[D_{t_3}^{(v_3)} ~ | ~ \mathcal{F}_{\hat{t}}  \right]e^{\int_{t_{in}}^{t_1} \mu_1(s) ds} e^{\int_{t_1}^{t_3} \mu_3(s) ds}.
\end{align*}
The total optimal inflow $u(t_{in})$ at node $v_0$ is then given by $u(t_{in}) = u_2(t_{in}) + u_3(t_{in})$.

\section{Different control strategies based on the degree of information}
\label{cha.3}
This section deals with different control strategies based on different information levels. We are able to adjust two influencing factors in the supply network. On the one hand, we can control the inflow at node $v_0$ (represented by equation (\ref{boundaryData}) in the optimization problem) and on the other hand we can rearrange the distribution at each inner node (represented by (\ref{fluxCons1}) and (\ref{fluxCons2})). An additional adjustment at the inner nodes seems reasonable if there is more recent demand information available than at the time of the injection. The choice of the optimal injection has been mainly discussed in Section $\ref{cha4}$. It remains to determine the optimal distribution parameters $\alpha_{i,k}(t)$. Before we investigate the different scenarios further, we introduce some notation.

Let us define a function $\tilde{p}$ that maps a given node to the direct predecessor node. Similarly, the function $\tilde{q}$ maps a given arc $i$ to the directly preceding arc. The function $\tilde{c}$ maps a node $v_i$ to all \textit{demand}-nodes which are successors of $v_i$. The set $J^{out}_{v_i}$ is defined to be the set of all directly outgoing arcs of node $v_i$. 
Additionally, the function $\tilde{t}$ assigns, given two nodes $v_i$, $v_j$ and a time $t$, the time $\tilde{t}(v_i,v_j,t)$ at which the electricity that leaves node $v_i$ at time $t$ reaches node $v_j$. Last, denote $\eta(v_i,v_j)$ to be the sequence of all network arcs leading from the inner node $v_i$ to the demand node $v_j$.

\subsection{Model setting 1: single demand update}
In model setting 1 (\textbf{MS1}) we consider the least information about demand that is possible. We assume that there is only one demand update right at the beginning of the time period at $t_0$. Further developments of the demand processes are not taken into consideration for the injection at the source or the distribution at the inner nodes. 
Therefore, according to Section $\ref{cha4}$ the optimal inflow is given by
\begin{align}
\begin{split}
\label{inflowMS1}
    u(t_{in}) &= \sum_{\lbrace i ~: ~v_i \in C\rbrace} \mathbb{E}\left[D_{\tilde{t}(v_0,v_i,t_{in})}^{(v_i)} | \mathcal{F}_{t_0} \right] \frac{\lambda_1(t_{in})}{\lambda_1(t_1)}e^{\int_{t_{in}}^{\tilde{t}(v_0,v_1,t_{in})}\mu_i(s)ds}\\
    &~~~\dot~ \prod_{l \in \eta(v_1,v_i)} \frac{\lambda_{l}(\tilde{t}(v_0,\tilde{p}(v_l),t_{in}))}{\lambda_{l}(\tilde{t}(v_0,v_l,t_{in}))}e^{\int_{\tilde{t}(v_0,\tilde{p}(v_l),t_{in})}^{\tilde{t}(v_0,v_l,t_{in})}\mu_l(s)ds}.
\end{split}
\end{align}
In contrast to Section $\ref{cha4}$, where we focus on 1-1 and 1-2 networks only, we now generalized the idea of building sub 1-1 networks for any demand node by introducing the set $\eta(v_1,v_i)$. 

We note that also the distribution parameters at the inner nodes are chosen optimally with respect to the knowledge at $t_0$. Since we have already calculated the optimal inflow shares for any demand node, they can be reused to determine the optimal distribution parameters. For any inner node $v_i$ this can be done by allocating all (demand) nodes included in the set $\tilde{c}(v_i)$ to the corresponding outgoing arc $k$ of $v_i$. Then, the distribution parameter for the share of ingoing flux moving from arc $i$ to arc $k$ at time $t$ is given by
\begin{align*}
 \alpha_{i,k}(t)= \frac{\sum_{v_q \in {\tilde{c}(v_k)}} \mathbb{E}\left[D_{\tilde{t}(v_i,v_q,t)}^{(v_q)} ~\big|~ \mathcal{F}_{t_0}\right]\prod_{l \in \eta(v_i,v_q)} \frac{\lambda_{l}(\tilde{t}(v_i,\tilde{p}(v_l),t))}{\lambda_{l}(\tilde{t}(v_i,v_l,t))}e^{\int_{\tilde{t}(v_i,\tilde{p}(v_l),t)}^{\tilde{t}(v_i,v_l,t)}\mu_l(s)ds}}{\sum_{v_r \in \tilde{c}(v_i)}  \mathbb{E}\left[D_{\tilde{t}(v_i,v_r,t)}^{(v_r)} ~ \big|~ \mathcal{F}_{t_0}\right] \prod_{l \in \eta(v_i,v_r)} \frac{\lambda_{l}(\tilde{t}(v_i,\tilde{p}(v_l),t))}{\lambda_{l}(\tilde{t}(v_i,v_l,t))}e^{\int_{\tilde{t}(v_i,v_l,t)}^{\tilde{t}(v_i,v_l,t)}\mu_l(s)ds}}.
\end{align*}
The distribution parameters are mainly driven by the expected demands conditioned on the state of the system at initial time $t_0$. In contrast to the calculations for the inflow at the source node in $(\ref{inflowMS1})$, the terms which stem from the flux conservation property and the damping are now starting in node $v_i$ and not in the source node. Since $\tilde{c}(v_k) \subset \tilde{c}(v_i)$ if arc $k$ is a successor of arc $i$ and all quantities are non-negative, it holds $\alpha_{i,k}(t) \in [0,1]$. Additionally, it holds that, due to the tree network structure, $\tilde{c}(v_i)$ can be comprised of the disjoint union $\biguplus\limits_{k \in J_{v_i}^{out}} \tilde{c}(v_k)$ which directly leads to the property
\begin{align}
\label{sumAlpha}
    \sum_{k \in J_{v_i}^{out}} \alpha_{i,k}(t) = 1
\end{align}
for any inner node $v_i$ and $t \in [t_0,T]$. 

\subsection{Model setting 2: multiple time-delayed demand updates}
In model setting 2 (\textbf{MS2}) we allow for regular demand updates to improve the accuracy of the injection at the source node $v_0$. The update times are given by a sequence $\lbrace \hat{t}_1,\dots, \hat{t}_k \rbrace$ with $t_0= \hat{t}_1 < \dots <\hat{t}_k \leq T$. We denote by $\tilde{t}^{-1}$ the inverse of the function $\tilde{t}$ with respect to the time argument such that $\tilde{t}^{-1}(v_0,v_i,s)$ returns the time at which the unit that reaches node $v_i$ at time $s$ has been injected in the system at $v_0$. The notation of $\lfloor s \rfloor_{\hat{t_j}}$ assigns the largest update time which is smaller than $s$. To account for the update times, we now consider a family of optimization problems of the following type:
\begin{align}
    \begin{split}
    \label{networkOCP}
        \underset{u \in L^2}{\min} & \sum_{\lbrace i~:~ v_i \in C \rbrace} \int_{\min \lbrace \tilde{t}(v_0,v_i,\hat{t}_j), T\rbrace}^{\min \lbrace\tilde{t}(v_0,v_i,\hat{t}_{j+1}), T \rbrace} \mathbb{E}\left[\left(D_s^{(v_i)} - f^{(i)}(z^{(i)}(1,s),s)\right)^2 ~\Big|~ \mathcal{F}_{\hat{t}_j}\right] ds\\
        \text{s.t.}~~~~ & z_t^{(i)}(x,t) + f^{(i)}(z^{(i)}(x,t),t)_x + g^{(i)}(z^{(i)}(x,t),t) = 0, ~~~~ \forall i \text{ s.t. } v_i \in J \cup C\\
        & z^{(i)}(x,\hat{t}_j) = z_{\text{old}}^{(i)}(x,\hat{t}_j), ~~~~ \forall i \text{ s.t. } v_i \in J \cup C\\
        & z^{(1)}(0,t)\lambda_1(t) = u(t) \\
        &f^{(k)}(z^{(k)}(0,t),t) = \alpha_{i,k}(t) f^{(i)}(z^{(i)}(1,t),t),~~~~ \forall i \text{ s.t. } v_i \in J, ~k \text{ outgoing arc of } v_i\\
        &\sum_{k \in J_{v_i}^{out}} \alpha_{i,k}(t) = 1,~~~~ \forall i \text{ s.t. } v_i \in J\\
        & x \in (0,1), ~~t \in [\hat{t}_j, \hat{t}_{j+1}].
    \end{split}
\end{align}
In addition to the initial condition at $t_0$, we have to set initial conditions at $\hat{t}_j$ for each optimization problem, which are naturally given by the system state of the predecessor problem at its terminal time. By $z_{\text{old}}^{(i)}(x,\hat{t}_j)$ we denote the state of the densities on the network arcs in the system at time $\hat{t}_j$. 

Compared to MS1, the inflow choice is not based on the demand information at $t_0$, but on the information that is available when the last demand update happened. Therefore, the optimal inflow at some time $t_{in} \in [\hat{t}_j, \hat{t}_{j+1})$ is given by
\begin{align}
\begin{split}
\label{inflowMS2}
    u(t_{in}) &= \sum_{\lbrace i ~: ~v_i \in C\rbrace} \mathbb{E}\left[D_{\tilde{t}(v_0,v_i,t_{in})}^{(v_i)} | \mathcal{F}_{\hat{t}_j} \right] \frac{\lambda_1(t_{in})}{\lambda_1(t_1)}e^{\int_{t_{in}}^{\tilde{t}(v_0,v_1,t_{in})}\mu_i(s)ds}\\
    &~~~\cdot \prod_{l \in \eta(v_1,v_i)} \frac{\lambda_{l}(\tilde{t}(v_0,\tilde{p}(v_l),t_{in}))}{\lambda_{l}(\tilde{t}(v_0,v_l,t_{in}))}e^{\int_{\tilde{t}(v_0,\tilde{p}(v_l),t_{in})}^{\tilde{t}(v_0,v_l,t_{in})}\mu_l(s)ds}.
\end{split}
\end{align}

At the inner nodes, there is also an update procedure for the distribution parameters. For MS2 we follow the idea that the distribution parameters for particular units are determined when they enter the supply network at node $v_0$. Therefore, we always use the demand information which is available at the time a particular unit has been injected into the system. Then, the optimal distribution parameter for the share of the flux moving from arc $i$ to arc $k$ at node $v_i$ and time $t$ is given by
\begin{align*}
 \alpha_{i,k}(t)= \dfrac{\sum_{v_q \in {\tilde{c}(v_k)}} \mathbb{E}\left[D_{\tilde{t}(v_i,v_q,t)}^{(v_q)} ~\big|~ \mathcal{F}_{\lfloor \tilde{t}^{-1}(v_0,{v_i},t) \rfloor_{\hat{t_j}}}\right]\prod_{l \in \eta(v_i,v_q)} \frac{\lambda_{l}(\tilde{t}(v_i,\tilde{p}(v_l),t))}{\lambda_{l}(\tilde{t}(v_i,v_l,t))}e^{\int_{\tilde{t}(v_i,\tilde{p}(v_l),t)}^{\tilde{t}(v_i,v_l,t)}\mu_l(s)ds}}{\sum_{v_r \in \tilde{c}(v_i)}  \mathbb{E}\left[D_{\tilde{t}(v_i,v_r,t)}^{(v_r)} ~ \big|~ \mathcal{F}_{\lfloor \tilde{t}^{-1}(v_0,{v_i},t) \rfloor_{\hat{t_j}}}\right] \prod_{l \in \eta(v_i,v_r)} \frac{\lambda_{l}(\tilde{t}(v_i,\tilde{p}(v_l),t))}{\lambda_{l}(\tilde{t}(v_i,v_l,t))}e^{\int_{\tilde{t}(v_i,v_l,t)}^{\tilde{t}(v_i,v_l,t)}\mu_l(s)ds}}.
\end{align*}
The main difference to MS1 is that we explicitly have to calculate the time $\tilde{t}^{-1}(v_0,{v_i},t)$ at which we have the latest information about the demand.
The properties of $\alpha_{i,k}(t)$ presented in MS1 apply also for MS2. If we choose $t_0$ to be the update time, we end up with the distribution parameters developed in MS1.

\subsection{Model setting 3: multiple instantaneous demand updates}
Similar to MS2, the last model setting 3 (\textbf{MS3}) also allows for regular demand updates at $\lbrace \hat{t}_1,\dots, \hat{t}_k \rbrace$ with $t_0= \hat{t}_1 < \dots <\hat{t}_k \leq T$. Therefore, we again consider a family of optimization problems presented in equation ($\ref{networkOCP}$). Also the choices of the optimal injection at the source node $v_0$ from MS2 in equation ($\ref{inflowMS2}$) remain unchanged because we assume the injection is always chosen optimally with respect to the latest demand update. The new aspect of MS3 is that also at the inner nodes the flux distribution is arranged optimally with respect to the latest demand information and not only with respect to the latest demand information when the particular unit entered the supply network. 
Therefore, the distribution parameters at the nodes are not given by the injection shares, but we have to consider an additional family of optimization problems for each inner node. The ingoing flux is considered to be given and flux conservation is required. Based on the choice of the distribution parameters $\alpha_{i,k}(t)$ we aim to minimize for a given inner node $v_i$ and a time $t$ the expected quadratic deviation of the difference of demand and supply, i.e.
\begin{align}
    \begin{split}
        \label{optContMS3}
        &\underset{\alpha_{i,k}(t) \in [0,1]}{\min} \sum_{\lbrace r:v_r \in \tilde{c}(v_i) \rbrace} \mathbb{E}\left[ \left(D_{\tilde{t}(v_i,v_r,t)}^{(v_r)} - f^{(r)}(z^{(r)}(1,\tilde{t}(v_i,v_r,t)),t)  \right)^2   ~|~ \mathcal{F}_{\lfloor t \rfloor_{\hat{t}_j}}\right]\\
        \text{s.t. }& f^{(k)} (z^{(k)}(0,t),t) = \alpha_{i,k}(t) f^{(i)}(z^{(i)}(1,t),t), k\in J^{out}_{v_i}\\
        &\sum_{k \in J_{v_i}^{out}} \alpha_{i,k}(t) = 1\\
        &f^{(k)}(z^{(k)}(0,t),t) = \sum_{\lbrace r:v_r \in \tilde{c}(v_k)\rbrace} f^{(r)}(z^{(r)}(1,\tilde{t}(v_i,v_r,t)),t) \\
        &\hspace{3.2cm}\cdot\prod_{l \in \eta(v_i,v_r)} \frac{\lambda_{l}(\tilde{t}(v_i,\tilde{p}(v_l),t))}{\lambda_{l}(\tilde{t}(v_i,v_l,t))}e^{\int_{\tilde{t}(v_i,\tilde{p}(v_l),t)}^{\tilde{t}(v_i,v_l,t)}\mu_l(s)ds}, ~~k \in J^{out}_{v_i}.
    \end{split}
\end{align}
In Section $\ref{cha4}$ we have explained that the flux at the demand node is related to the fluxes at the inner nodes taking into account the damping and the weighted densities at the inner nodes. Using these relations and introducing the following two definitions for the sake of better clarity, we can rewrite $(\ref{optContMS3})$ where we additionally choose the supply at the demand nodes to be the optimization variable and replace the distribution parameters for a second.
\begin{align}
    \begin{split}
        \label{optContMS3new}
        &\underset{m_i^r(t)}{\min} \sum_{\lbrace r:v_r \in \tilde{c}(v_i)\rbrace} \mathbb{E}\left[ \left(D_{\tilde{t}(v_i,v_r,t)}^{(v_r)} - m_i^r(t)  \right)^2   ~|~ \mathcal{F}_{\lfloor t \rfloor_{\hat{t}_j}}\right]\\
        \text{s.t. }& f^{(i)}(z^{(i)}(1,t),t) = \sum_{\lbrace r:v_r \in \tilde{c}(v_i)\rbrace} m_i^r(t) \gamma_i^r(t),
    \end{split}
\end{align}
where
\begin{align}
    \gamma_i^r(t) &:= \prod_{l \in \eta(v_i,v_r)} \frac{\lambda_{l}(\tilde{t}(v_i,\tilde{p}(v_l),t))}{\lambda_{l}(\tilde{t}(v_i,v_l,t))}e^{\int_{\tilde{t}(v_i,\tilde{p}(v_l),t)}^{\tilde{t}(v_i,v_l,t)}\mu_l(s)ds}\\
    m_i^r(t) &:= f^{(r)}(z^{(r)}(1,\tilde{t}(v_i,v_r,t)),t).
\end{align}
We end up with a family of optimization problems with one constraint. The constraint ensures flux conservation at the inner node $v_i$.
The following Lemma provides an optimality result on how the distribution of the flow at an arbitrary inner node $v_i$ should be arranged according to the latest demand information at $\hat{t}_j$. 

\begin{lemma}
\label{optMS3Lemma}
Fix an inner node $v_i$. For $t \in [\hat{t}_j,\hat{t}_{j+1}]$ consider the optimization problem
\begin{align*}
    &\underset{m_i^r(t)}{\min} \sum_{v_r \in \tilde{c}(v_i)} \mathbb{E} \left[\left( D_{\tilde{t}(v_i,v_r,t)}^{(v_r)} - m_i^r(t)\right)^2 | \mathcal{F}_{\hat{t}_j} \right]\\
    & \text{s.t. } f_{in}^{(i)}(t):= f^{(i)}(z^{(i)}(1,t),t) = \sum_{\lbrace r:v_r \in \tilde{c}(v_i)\rbrace} m_i^r(t) \gamma_i^r(t),
\end{align*}
where $\hat{t}_j$ is the time until which information is available. The optimal choices for $m_i^r(t)$, for which $v_{r} \in  \tilde{c}(v_i)$, are given by
\begin{align*}
    m_i^{r}(t) &= \mathbb{E}\left[D_{\tilde{t}(v_i,v_{r},t)}^{(v_{r})} | \mathcal{F}_{\hat{t}_j}\right]\\
&~+\frac{\gamma_i^r(t)}{\sum_{v_c \in \tilde{c}(v_i)} \left(\gamma_i^c(t)\right)^2} \left(f_{in}^{(i)}(t) - \sum_{v_c \in \tilde{c}(v_i)} \gamma_i^c(t) \mathbb{E}\left[D_{\tilde{t}(v_i,v_c,t)}^{(v_c)} | \mathcal{F}_{\hat{t}_j} \right]   \right).
\end{align*}
\end{lemma}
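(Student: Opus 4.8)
The plan is to treat this as a constrained quadratic minimization and to reduce it, via a conditional bias--variance decomposition (in the spirit of Theorem~\ref{meansquareTheorem}), to projecting the origin onto an affine hyperplane; the explicit projection formula then gives the claimed expression. Throughout I abbreviate $\mu_r := \mathbb{E}\bigl[D^{(v_r)}_{\tilde{t}(v_i,v_r,t)} \,\big|\, \mathcal{F}_{\hat{t}_j}\bigr]$ and $\gamma_r := \gamma_i^r(t)$, and I note that the admissible controls $m_i^r(t)$ are $\mathcal{F}_{\hat{t}_j}$-measurable (they encode the distribution decision taken with the information available at $\hat{t}_j$), while each $D^{(v_r)}_{\tilde{t}(v_i,v_r,t)}$ is square integrable by the moment formulas for the Jacobi process from Section~\ref{cha2}.

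First I would write $m_i^r(t) = \mu_r + n_r$ with $n_r := m_i^r(t) - \mu_r$, again $\mathcal{F}_{\hat{t}_j}$-measurable. Since $\mathbb{E}\bigl[D^{(v_r)}_{\tilde{t}(v_i,v_r,t)} - \mu_r \,\big|\, \mathcal{F}_{\hat{t}_j}\bigr] = 0$ and $n_r$ factors out of the conditional expectation, the cross term vanishes and
\begin{align*}
\mathbb{E}\!\left[\bigl(D^{(v_r)}_{\tilde{t}(v_i,v_r,t)} - m_i^r(t)\bigr)^2 \,\big|\, \mathcal{F}_{\hat{t}_j}\right]
&= \mathbb{E}\!\left[\bigl(D^{(v_r)}_{\tilde{t}(v_i,v_r,t)} - \mu_r\bigr)^2 \,\big|\, \mathcal{F}_{\hat{t}_j}\right] + n_r^2 .
\end{align*}
Summing over $v_r \in \tilde{c}(v_i)$, the first family of terms is independent of the controls, so the problem is equivalent to minimizing $\sum_{v_r \in \tilde{c}(v_i)} n_r^2$ over the now affine constraint $\sum_{v_r \in \tilde{c}(v_i)} \gamma_r n_r = f_{in}^{(i)}(t) - \sum_{v_r \in \tilde{c}(v_i)} \gamma_r \mu_r =: c$.

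Next I would solve this finite-dimensional problem: minimizing the Euclidean norm of $n = (n_r)_r$ over a hyperplane with normal vector $\gamma = (\gamma_r)_r$ (equivalently, one line of Lagrange multipliers with multiplier determined by the constraint) yields $n_r = \tfrac{c}{\sum_{v_c \in \tilde{c}(v_i)} \gamma_c^2}\,\gamma_r$. Here the denominator is strictly positive because each $\gamma_i^c(t)$ is a finite product of strictly positive velocity ratios and exponentials, so the formula is well defined; strict convexity of $n \mapsto \sum_r n_r^2$ on the affine feasible set guarantees that this critical point is the unique global minimizer. Substituting back $m_i^r(t) = \mu_r + n_r$ and unfolding $c$ reproduces exactly the stated expression.

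The only point requiring genuine care is the reduction step: one must confirm that the admissible controls are $\mathcal{F}_{\hat{t}_j}$-measurable, so that they pull out of the conditional expectation and the cross term indeed vanishes, and that all quantities are integrable, which follows from the Jacobi moment bounds. Once this is in place, the remainder is a routine constrained least-squares computation, so I expect no real obstacle beyond this bookkeeping.
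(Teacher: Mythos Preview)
Your argument is correct and, in fact, cleaner than the paper's. Both proofs ultimately use the same first-order condition $m_i^r = \mu_r - \tfrac{\xi}{2}\gamma_r$ coming from a Lagrange multiplier, but you isolate the probabilistic content up front via the bias--variance identity $\mathbb{E}[(D-m)^2\mid\mathcal{F}_{\hat t_j}] = \mathrm{Var}(D\mid\mathcal{F}_{\hat t_j}) + (m-\mu)^2$, reducing the problem to the purely geometric task of projecting the origin onto the hyperplane $\{\gamma\cdot n = c\}$; the paper instead carries the conditional expectations through the Lagrangian and then eliminates variables pairwise ($m_i^{r_1}$ in terms of $m_i^{r_2}$, then $m_i^{r_2}$ via the constraint, then back-substitution), which works but obscures the structure. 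Your route also makes two points explicit that the paper leaves implicit: the measurability assumption on the controls that justifies pulling $n_r$ out of the conditional expectation, and the strict convexity argument that the stationary point is indeed the global minimizer. Either proof is acceptable; yours is shorter and more transparent about why the answer has the ``conditional mean plus correction along $\gamma$'' form.
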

\begin{proof}
We set up the Lagrangian function for $\xi \in \mathbb{R}$: 
\begin{align*}
    \mathcal{L}(m_i^{r_1}(t),\dots, m_i^{{r_{|C|}}}(t), \xi) &= \sum_{v_{r_l} \in \tilde{c}(v_i)} \mathbb{E}\left[\left(D_{\tilde{t}(v_i,v_{r_l},t)}^{(v_r)} - m_i^{r_l}(t) \right)^2 | \mathcal{F}_{\hat{t}_j}\right] \\
    &~~- \xi \left( f_{in}^{(i)}(t) - \sum_{v_{r_l} \in \tilde{c}(v_i)} \gamma_i^{r_l}(t) \cdot m_i^{r_l}(t)\right),
\end{align*}
where $|C|$ denotes the number of demand nodes in $\tilde{c}(v_i)$. The partial derivatives of $\mathcal{L}$ are given by
\begin{align}
    \frac{\partial \mathcal{L}}{\partial m_i^{r_q}(t)}\left(m_i^{r_1}(t),\dots, m_i^{r_{|C|}}(t), \xi\right) &= -2 \mathbb{E}\left[D_{\tilde{t}(v_i,v_{r_q},t)}^{(v_{r_q})} | \mathcal{F}_{\hat{t}_j}\right] + 2 m_i^{r_q}(t)+ \xi \gamma_{i}^{r_q}(t) ,~~~ v_{r_q} \in \tilde{c}(v_i) \label{AblL1}\\
    \frac{\partial \mathcal{L}}{\partial \xi}\left(m_i^{r_1}(t),\dots, m_i^{r_{|C|}}(t), \xi \right) &=  f_{in}^{(i)}(t) - \sum_{v_{r_l} \in \tilde{c}(v_i)} \gamma_{i}^{r_l}(t) \cdot m_i^{r_l}(t) \label{AblL2}.
\end{align}
Setting $(\ref{AblL1})$ equal zero, we obtain the following two expressions
\begin{align}
    m_i^{r_q}(t) &= \mathbb{E}\left[D_{\tilde{t}(v_i,v_{r_q},t)}^{(v_{r_q})} | \mathcal{F}_{\hat{t}_j}  \right] - \frac{\xi}{2}\gamma_{i}^{{r_q}}(t) \label{AblL3}\\
    \frac{\xi}{2} &= \frac{1}{\gamma_{i}^{r_q}(t)}  \left(\mathbb{E}\left[D_{\tilde{t}(v_i,v_{r_q},t)}^{(v_{r_q})} | \mathcal{F}_{\hat{t}_j}  \right] - m_i^{r_q}(t) \right)\label{AblL4}.
\end{align}
Using equation $(\ref{AblL3})$ for an arbitrary but fixed $v_{r_1} \in \tilde{c}(v_i)$ and plugging in $(\ref{AblL4})$ for arbitrary but fixed $v_{r_2} \in \tilde{c}(v_i), ~v_{r_1}\neq v_{r_2}$, we get
\begin{align}
    m_i^{r_1}(t) = \mathbb{E} \left[D_{\tilde{t}(v_i,v_{r_1},t)}^{(v_{r_1})} | \mathcal{F}_{\hat{t}_j}  \right] - \frac{\gamma_{i}^{r_1}(t)}{\gamma_{i}^{r_2}(t)} \mathbb{E} \left[D_{\tilde{t}(v_i,v_{r_2},t)}^{(v_{r_2})} | \mathcal{F}_{\hat{t}_j}  \right] + \frac{\gamma_{i}^{r_1}(t)}{\gamma_{i}^{r_2}(t)} m_i^{r_2}(t). \label{AblL5}
\end{align}
Setting $(\ref{AblL2})$ equal zero and solving for $m_i^{r_2}(t)$ we get together with plugging in $m_i^{r_l}(t)$ from $(\ref{AblL3})$
\begin{align*}
    \gamma_{i}^{r_2}(t) m_i^{r_2}(t) &= f_{in}^{(i)}(t) - \sum_{v_{r_l} \in \tilde{c}(v_i), v_{r_l}\neq v_{r_2}} \gamma_{i}^{r_l}(t)  \mathbb{E} \left[D_{\tilde{t}(v_i,v_{r_l},t)}^{(v_{r_l})} | \mathcal{F}_{\hat{t}_j} \right] + \sum_{v_{r_l} \in \tilde{c}(v_i), v_{r_l}\neq v_{r_2}} \left(\gamma_{i}^{r_l}(t) \right)^2 \frac{\xi}{2}\\
    & \overset{(\ref{AblL4})}{=} f_{in}^{(i)}(t) - \sum_{v_{r_l} \in \tilde{c}(v_i), v_{r_l}\neq v_{r_2}} \gamma_{i}^{r_l}(t)  \mathbb{E} \left[D_{\tilde{t}(v_i,v_{r_l},t)}^{(v_{r_l})} | \mathcal{F}_{\hat{t}_j} \right] \\
    &~~~+ \sum_{v_{r_l} \in \tilde{c}(v_i), v_{r_l}\neq v_{r_2}}  \frac{\left(\gamma_{i}^{r_l}(t) \right)^2}{\gamma_{i}^{r_1}(t)}  \left(\mathbb{E}\left[D_{\tilde{t}(v_i,v_{r_1},t)}^{(v_{r_1})} | \mathcal{F}_{\hat{t}_j}  \right] - m_i^{r_1}(t) \right)
    \end{align*}
    Solving this equation for $m_i^{r_2}(t)$ yields
    \begin{align*}
    m_i^{r_2}(t) &= \frac{1}{\gamma_{i}^{r_2}(t)}\Bigg(f_{in}^{(i)}(t) - \sum_{v_{r_l} \in \tilde{c}(v_i), v_{r_l}\neq v_{r_2}} \gamma_{i}^{r_l}(t)  \mathbb{E} \left[D_{\tilde{t}(v_i,v_{r_l},t)}^{(v_{r_l})} | \mathcal{F}_{\hat{t}_j} \right] \\
    &~~~+ \sum_{v_{r_l} \in \tilde{c}(v_i), v_{r_l}\neq v_{r_2}}  \frac{\left(\gamma_{i}^{r_l}(t) \right)^2}{\gamma_{i}^{r_1}(t)}  \left(\mathbb{E}\left[D_{\tilde{t}(v_i,v_{r_1},t)}^{(v_{r_1})} | \mathcal{F}_{\hat{t}_j}  \right] - m_i^{r_1}(t) \right)  \Bigg).
\end{align*}
Plugging $m_i^{r_2}$ into $(\ref{AblL5})$ we get
\begin{align*}
    m_i^{r_1}(t) &= \mathbb{E} \left[D_{\tilde{t}(v_i,v_{r_1},t)}^{(v_{r_1})} | \mathcal{F}_{\hat{t}_j}  \right] - \frac{\gamma_{i}^{r_1}(t)}{\gamma_{i}^{r_2}(t)} \mathbb{E} \left[D_{\tilde{t}(v_i,v_{r_2},t)}^{(v_{r_2})} | \mathcal{F}_{\hat{t}_j}  \right] + \frac{\gamma_{i}^{r_1}(t)}{(\gamma_{i}^{r_2}(t))^2} f_{in}^{(i)}(t) \\
    &~- \sum_{v_{r_l} \in \tilde{c}(v_i), v_{r_l}\neq v_{r_2}} \frac{\gamma_{i}^{r_1}(t)\gamma_{i}^{r_l}(t)}{\left(\gamma_{i}^{r_2}(t)\right)^2}\mathbb{E} \left[D_{\tilde{t}(v_i,v_{r_l},t)}^{(v_{r_l})} | \mathcal{F}_{\hat{t}_j}  \right] \\
    &~+ \sum_{v_{r_l} \in \tilde{c}(v_i), v_{r_l}\neq v_{r_2}} \frac{\left(\gamma_{v_i}^{r_l}(t)\right)^2}{\left(\gamma_{i}^{r_2}(t)\right)^2}\left(\mathbb{E}\left[D_{\tilde{t}(v_i,v_{r_1},t)}^{(v_{r_1})} | \mathcal{F}_{\hat{t}_j} \right] - m_i^{r_1}(t)  \right)\\
    \end{align*}
    Next, we collect all terms of $m_i^{r_1}(t)$ on the left side. Additionally, we expand the factors of the conditional expectations such that they are gathered in one summation over all demand nodes.
\begin{align*}
    m_i^{r_1}(t)  \left(\sum_{v_{r_l} \in \tilde{c}(v_i)} \left(\gamma_{i}^{r_l}(t)\right)^2 \right) &= \mathbb{E} \left[D_{\tilde{t}(v_i,v_{r_1},t)}^{(v_{r_1})}   | \mathcal{F}_{\hat{t}_j}\right]\left(\sum_{v_{r_l} \in \tilde{c}(v_i)} \left(\gamma_{v_i}^{r_l}(t)\right)^2 \right) + \gamma_{i}^{r_1}(t)f_{in}^{(i)}(t) \\
    &~- \sum_{v_{r_l} \in \tilde{c}(v_i)}\gamma_{i}^{r_1}(t)\gamma_{i}^{r_l}(t) \mathbb{E}\left[D_{\tilde{t}(v_i,v_{r_l},t)}^{(v_{r_l})} | \mathcal{F}_{\hat{t}_j}  \right] \\
    \Leftrightarrow m_i^{r_1}(t)  &= \mathbb{E} \left[D_{\tilde{t}(v_i,v_{r_1},t)}^{(v_{r_1})}   | \mathcal{F}_{\hat{t}_j}\right]  \\
    &+ \frac{\gamma_{i}^{r_1}(t)}{\sum_{v_{r_l} \in \tilde{c}(v_i)} \left(\gamma_{i}^{r_l}(t)\right)^2} \left(f_{in}^{(i)}(t) - \sum_{v_{r_l} \in \tilde{c}(v_i)} \gamma_{i}^{r_l}(t) \mathbb{E}\left[D_{\tilde{t}(v_i,v_{r_l},t)}^{(v_{r_l})} | \mathcal{F}_{\hat{t}_j} \right]   \right).
\end{align*}
The optimal inflow from arc $i$ into arc $k$ is then given by
\begin{align*}
    f^{(k)} (z^{(k)}(0,t),t) &= \sum_{v_q \in \tilde{c}(v_{k})} \left( \gamma_i^{q}(t) \cdot \mathbb{E} \left[D_{\tilde{t}(v_i,v_q,t)}^{(v_q)}   | \mathcal{F}_{\hat{t}_j}\right] \right) \\
    &~~+ \frac{\sum_{v_q \in \tilde{c}(v_{k})}\left(\gamma_{i}^{q}(t)\right)^2}{\sum_{v_{r} \in \tilde{c}(v_i)} \left(\gamma_{i}^{r}(t)\right)^2} \left(f_{in}^{(i)}(t) - \sum_{v_{r} \in \tilde{c}(v_i)} \gamma_{i}^{r}(t) \mathbb{E}\left[D_{\tilde{t}(v_i,v_{r},t)}^{(v_{r})} | \mathcal{F}_{\hat{t}_j} \right]   \right). \qedhere
\end{align*}
\end{proof}

Using this result we can directly deduce the values for the optimal distribution parameter $\alpha_{i,k}(t)$ of the flux directed from arc $i$ to arc $k$ at time $t$ in MS3 by
\newpage
{\small
\begin{align*}
    &\alpha_{i,k}(t) =\\ &\dfrac{\sum\limits_{v_q \in \tilde{c}(v_{k})} \left( \gamma_i^{q}(t) \cdot \mathbb{E} \left[D_{\tilde{t}(v_i,v_q,t)}^{(v_q)}   | \mathcal{F}_{\hat{t}_j}\right] \right) + \dfrac{\sum\limits_{v_q \in \tilde{c}(v_{k})}\left(\gamma_{i}^{q}(t)\right)^2}{\sum\limits_{v_{r} \in \tilde{c}(v_i)} \left(\gamma_{i}^{r}(t)\right)^2} \left(f_{in}^{(i)}(t) - \sum\limits_{v_{r} \in \tilde{c}(v_i)} \gamma_{i}^{r}(t) \mathbb{E}\left[D_{\tilde{t}(v_i,v_{r},t)}^{(v_{r})} | \mathcal{F}_{\hat{t}_j} \right]   \right)}{\sum\limits_{v_r \in \tilde{c}(v_{i})} \left( \gamma_i^{r}(t) \cdot \mathbb{E} \left[D_{\tilde{t}(v_i,v_r,t)}^{(v_q)}   | \mathcal{F}_{\hat{t}_j}\right] \right) + \dfrac{\sum\limits_{v_q \in \tilde{c}(v_{k})}\left(\gamma_{i}^{q}(t)\right)^2}{\sum\limits_{v_{r} \in \tilde{c}(v_i)} \left(\gamma_{i}^{r}(t)\right)^2} \left(f_{in}^{(i)}(t) - \sum\limits_{v_{r} \in \tilde{c}(v_i)} \gamma_{i}^{r}(t) \mathbb{E}\left[D_{\tilde{t}(v_i,v_{r},t)}^{(v_{r})} | \mathcal{F}_{\hat{t}_j} \right]   \right)}.
\end{align*} }
The property from equation $(\ref{sumAlpha})$ that the distribution parameters sum up to 1 still holds true in MS3. But we face the drawback, that we cannot guarantee anymore that $\alpha_{i,k}(t) \in [0,1]$, especially in environments in which demand is highly volatile and the inflow is low. Negative distribution parameters lead to a negative inflow into an arc which represents an incorrect and not meaningful solution. This issue can be avoided by setting potential negative distribution parameters to 0 and add the remaining shares proportional to the other outgoing arcs. In the special case in which the inflow into node $v_i$ matches the expected demand scaled by the damping and density-discontinuity compensation, i.e.
\begin{align*}
    f_{in}^{(i)}(t) = \sum_{v_{r} \in \tilde{c}(v_i)} \gamma_{i}^{r}(t) \mathbb{E}\left[D_{\tilde{t}(v_i,v_{r},t)}^{(v_{r})} | \mathcal{F}_{\hat{t}_j} \right],
\end{align*}
we end up with the distribution parameters from MS2.

\section{Numerical study}
\label{cha.num}
In this section we present simulation results for the theoretical investigations presented in Sections \ref{cha4} and \ref{cha.3} on the optimal injection and the different information settings. We compare a simulation using the optimal injections and distribution parameters derived in Section \ref{cha.3} with a scenario in which we solve the deterministically reformulated optimization problems (\ref{ControlProblem111}) using the Matlab routine \textit{fmincon}. In the end, we focus on the comparison between demands given by the Jacobi process and the Ornstein-Uhlenbeck process.

We start with the introduction of the numerical discretization for the SDE given by the constraint (\ref{Demand}).
\paragraph{Discretization of (\ref{Demand})} The Jacobi process in the form of (\ref{Jacobi2}) which represents the constraint $(\ref{Demand})$ can be approximated using an Euler-Maruyama-scheme on a time grid $(t_j)_{j \in \mathbb{N}}$ with $t_j + t_0 + j\Delta t$ for some $\Delta t>0$ small enough:
\begin{align*}
    Z_{j+1} = Z_j + \Delta t \kappa (\theta(t_j) - Z_j) + \sigma \sqrt{\Delta t Z_j (1-Z_j)} X_j,
\end{align*}
where $X_j$ is a realization of a standard normal distributed random variable. Since the standard normal distribution takes values in an unbounded interval, we need to avoid values outside the interval $[0,1]$ for $Z_{j+1}$. Therefore, we add a truncation into the Euler-Maruyama scheme such that the process is reflected back into $[0,1]$
\begin{align*}
    Z_{j+1} &= \begin{cases} 1, &Z_{j+1}^* \geq 1\\
    Z_{j+1}^*, & Z_{j+1}^* \in (0,1) \\
    0, & Z_{j+1}^* \leq 0,\end{cases}
\end{align*}
where $Z_{j+1}^* = Z_j + \Delta t \kappa (\theta(t_j) - Z_j) + \sigma \sqrt{\Delta t Z_j (1-Z_j)} X_j$. 

\paragraph{Discretization of (\ref{NetworkDynamics})}
For the numerical discretization of the network dynamics from (\ref{NetworkDynamics}) we use a splitting algorithm to separate the flow dynamics and the damping effects.  
To solve the flow dynamics on arc $l$ we use an adaptive upwind scheme on a time grid $(t_j^l)_{j\in \mathbb{N}}$ with $t_{j+1}^l = t_j^l + \Delta t_j^l$ and the spatial discretization $(x_i)_{i\in \mathbb{N}}$ where $x_i = x_0 + i \Delta x$. Note that the time grid of the stochastic differential equation and the partial differential equation do not have to coincide. The step sizes are chosen such that the CFL condition is satisfied with equality in every time step, i.e. $\tfrac{\Delta t_j^{l}}{\Delta x} \lambda_l(t_j) = 1$. Therefore, the temporal grids depend on the velocity functions of the particular arc:
\begin{align*}
    \tilde{z}_i^{(l),j+1} = z_i^{(l),j} + \frac{\Delta t^{l}_j}{\Delta x} \lambda_l(t_j^l) \left(z_i^{(l),j} - z_{i-1}^{(l),j} \right).
\end{align*}
In a second step we take into account the damping and calculate $z_i^{(l),j+1}$ by
\begin{align*}
    z_i^{(l),j+1} = (1-\Delta t^{l}_j \mu_l(t_j)) \tilde{z}_i^{(l),j+1}.
\end{align*}
The choice of the upwind scheme is reasonable here, since we have linear dynamics with a fixed direction of movement. However, because of the different temporal step sizes on the different arcs there might be a mismatch in the time grids of two consecutive arcs. To ensure flux conservation at every node it is necessary to align the temporal grids of the discretization. Therefore, we introduce a time grid $(t_j^{\#})_{j \in \mathbb{N}}$ with fixed step size $\Delta t^{\#}>0$ and $t_j^{\#} = t_0 + j\cdot \Delta t^{\#}$ which is not dependent on the velocity functions of the arcs. At every node $v_l$ the ingoing flux $f_{in}^{(v_l)}$ on this time grid is calculated by
\begin{align*}
    f_{in}^{(v_l)}(t_j^{\#}) = z_{\text{end}}^{(l),k}\lambda_l(t_k^{l}) \frac{t_{k+1}^{l} - t_j^{\#}}{\Delta t^l_j} + z_{\text{end}}^{(l),k+1}\lambda_l(t_{k+1}^{(l)}) \frac{ t_j^{\#} - t_k^{(l)}}{\Delta t^l_j},~~\text{for }t_j^{\#} \in [t_k^{l}, t_{k+1}^{l}),
\end{align*}
where $z_{\text{end}}^{(l),k}$ denotes the density at the last spatial grid point of arc $l$ at $t_k^l$. 
Analogously, the outgoing fluxes of node $v_l$ given by $ f_{out}^{(v_l)}$  have to fulfill
\begin{align*}
    f_{out}^{(v_l)}(t_j^{\#}) = \sum_{m \in J^{out}_{v_l}} z_{1}^{(m),k}\lambda_m(t_k^{(m)}) \frac{t_{k+1}^{(m)} - t_j^{\#}}{\Delta t^{\#}} + z_{1}^{(m),k+1}\lambda_m(t_{k+1}^{(m)}) \frac{ t_j^{\#} - t_k^{(m)}}{\Delta t^{\#}},~\text{for }t_j^{\#} \in [t_k^{m}, t_{k+1}^{m}).
\end{align*}
The values of $z_1^{(m),k}$ act as the boundary conditions for arc $m \in J^{out}_{v_l}$ and are calculated using the distribution parameter $\alpha_{l,m}(t_j^m)$ and a weighted temporal average by
\begin{align*}
    z_1^{(m),k}(t_j^m) = \alpha_{l,m}(t_j^m)\left(\frac{f_{out}^{(v_l)}(t_k^{\#})}{\lambda_m(t_k^{\#})} \frac{t_j^m - t_k^{\#}}{\Delta t^{\#}} + \frac{f_{out}^{(v_l)}(t_{k+1}^{\#})}{\lambda_m(t_{k+1}^{\#})} \frac{t_{k+1}^{\#} - t_j^m}{\Delta t^{\#}} \right) ,~~\text{for }t_j^{m} \in [t_k^{\#}, t_{k+1}^{\#}).
\end{align*}

The reformulated and deterministic objective function of the optimization problem in (\ref{ofunc}) and the optimization problems resulting from MS3 in (\ref{optContMS3new}), which can be similarly interpreted as a deterministic optimization problem, are calculated using the \textit{fmincon} solver from Matlab R2021a.

To validate whether our numerical study works in expectation, we perform a Monte Carlo simulation with $N=500$ samples and consider a time period from $t_0=0$ to $T=2.5$. For the Euler-Maruyama scheme a temporal step size of $\Delta t = 10^{-4}$ is chosen. In the Upwind-scheme for the discretization of the dynamics on the arcs we choose $\Delta x = 5 \cdot 10^{-3}$ and adjust the temporal step size such that the CFL-condition holds exactly. 

As an error measure we consider a normalized root mean squared error of the deviation between the actual demand and the supply for any demand node $v_i$. The integration is performed on the interval starting at the first time an injected unit reaches the demand node up to the terminal time $T$ and approximated using a rectangular rule. For better comparability we normalize the error such that the time an injected unit reaches the demand node does not influence the error measure significantly., i.e
\begin{align*}
    \text{normRMSE}_i(z^{(i)}(1,\cdot)) = \frac{1}{T - \tilde{t}(v_0,v_i,t_0)}\int_{\tilde{t}(v_0,v_i,t_0)}^T \sqrt{\mathbb{E}\left[\left(D_s^{(v_i)} - f^{(i)}(z^{(i)}(1,s),s) \right)^2 | \mathcal{F}_{\hat{t}_j}\right]  }ds .
\end{align*}

\subsection{Simulation results for a 1-2 network}
We start with the presentation of results for a 1-2-network shown in Figure $\ref{network1-2}$. Even though this seems to be a very small example, the main characteristics are observable.

\paragraph{Deterministic demand}
For validity purposes, we define a benchmark framework in which we do not consider any stochasticity in the demand, i.e. $\sigma^{(v_i)} = 0$. Furthermore, we assume the velocities on the network arcs to be independent of time and constant all over the network. We investigate a setting without any damping term ($\mu_i^1(t)=0$) and compare it to a setting with constant and time-independent damping ($\mu_i^2(t)=0.4$) in the supply network. The choices of the parameters can be found in Table \ref{params1}.

\begin{table}[h!]
    \centering
    \begin{tabular}{c|c|c|c||c|c|c|c|c}
    \hline
         arc & $\lambda_i(t)$ & $\mu_i^1(t)$& $\mu_i^2(t)$ & node &$\theta^{(v_i)}(t)$ & $\kappa^{(v_i)}$ & $\sigma^{(v_i)}$ & $(d_0)^{(v_i)}$    \\
         \hline
         \hline
         1 & $14$ & $0$&$0.4$&-&-&-&-&- \\
         \hline
         \hline
         2 & $14$ &$0$&$0.4$ &$v_2$& $0.45 + 0.2\sin(t \pi+1)$ & $0$& $0$ & $0.4$\\
         \hline
         3 & $14$ &$0$&$0.4$ &$v_3$& $0.5 + 0.3\sin(t\pi-0.5)$ &$0$ & $0$ & $0.6$\\
         \hline
    \end{tabular}
    \caption{Parameter choices for the 1-2 network in the first example.}
    \label{params1}
\end{table}

Figure \ref{figSett1} shows the input control at the source node $v_0$ for the case with and without damping when we solve the optimal control problem (\ref{ControlProblem111}) using \textit{fmincon}. A simulation with the optimal inflow and distribution parameters from Section \ref{cha.3} leads to almost exactly the same result. We also present the comparison of supply in all the model settings (MS1, MS2, MS3) and demand at the two demand nodes $v_2$ and $v_3$. Since the demand is purely deterministic in this setup supply matches demand almost perfectly in all three cases. Due to the deterministic setting, the demand evolution is known in advance and no further improvement can be achieved when adding demand updates. We do not observe any difference in supply comparing the damped system with the system in which there is no damping, therefore we present only one figure for supply and demand at each node. This is an expected effect since the loss that occurs due to the damping is known in advance and taken into account when we determine the control. We will observe this pattern not only in this simple deterministic setting but throughout the whole section. The effect of the damping can be recognized in the two different controls. In the damped scenario the control has to be larger than in the undamped case. Note that the controls of MS2 and MS3 coincide.

\begin{figure}[h!]
    \centering
    \includegraphics[width=\textwidth]{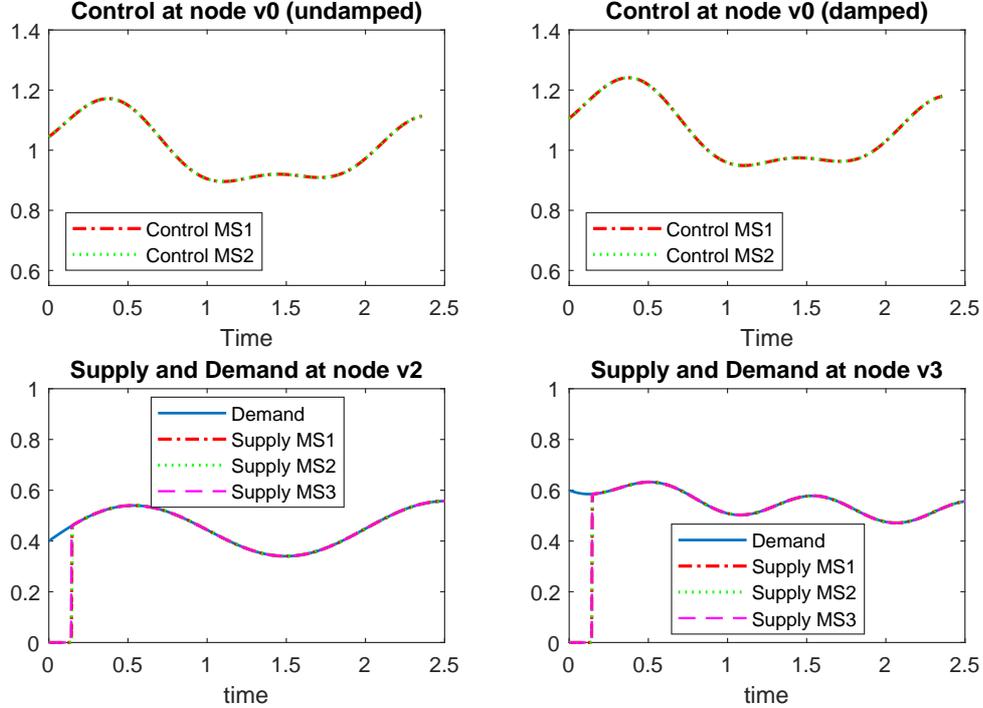}
    \caption{Control at the source node $v_0$ for the undamped scenario (left) and the damped scenario (right) and the supply in the three different settings plotted together with the actual demand from the first example.}
    \label{figSett1}
\end{figure}

We compare the normalized root mean squared errors in a Monte Carlo simulation with $N=500$ runs of the different information and update scenarios using the optimal conditions and \textit{fmincon}. For MS2 and MS3 we choose update intervals of $\Delta t_{up} = \frac{5}{14}$ which correspond to 6 updates. The normalized root mean squared errors with superscript 1 and superscript 2 correspond to the undamped and damped scenario, respectively. Since for a deterministic demand process all three levels of information coincide the error measures look the same for all cases, i.e. for the \textit{fmincon} study normRMSE$_2^1=$ normRMSE$_2^2 = 0.795\cdot 10^{-4}$ and normRMSE$_3^1=$ normRMSE$_3^2 = 0.155\cdot 10^{-4}$. The deviations from these error measures for using the calculated optimal values are negligible.

\paragraph{Stochastic demand and non-constant velocities and damping} As a second example we discuss a scenario, where on the one hand we have highly fluctuating demand and on the other hand the dynamics which differ from arc to arc may be dependent on the time. Table \ref{params2} shows the particular choices of the parameters. Similar to the first example we distinguish between a scenario with and without damping. For MS2 and MS3 we choose update intervals of $\Delta t_{up} = \frac{5}{14}$ which correspond to 6 updates.

\begin{table}[h!]
    \centering
    \scalebox{0.9}{
    \begin{tabular}{c|c|c|c||c|c|c|c|c}
    \hline
         arc & $\lambda_i(t)$ & $\mu_i^1(t)$& $\mu_i^2(t)$ & node &$\theta^{(v_i)}(t)$ & $\kappa^{(v_i)}$ & $\sigma^{(v_i)}$ & $(d_0)^{(v_i)}$    \\
         \hline
         \hline
         1 & $14 + \sin(2\pi t)$ & $0$&$0.4 + 0.2\sin(\pi t)$&-&-&-&-&- \\
         \hline
         \hline
         2 & $12 + \sin(2\pi t)$ &$0$&$0.5+ 0.2\sin(\pi t)$ &$v_2$& $0.45 + 0.2\sin(t \pi+1)$ & $2$& $\frac{9}{4}$ & $0.4$\\
         \hline
         3 & $12 + \sin(4\pi t)$ &$0$&$0.5+ 0.3\sin(\pi t)$ &$v_3$& $0.5 + 0.3\sin(t\pi-0.5)$ &$1$ & $\frac{3}{2}$ & $0.6$\\
         \hline 
    \end{tabular}}
    \caption{Parameter choices for the 1-2 network in the second example.}
    \label{params2}
\end{table}

Figure \ref{figSett2} shows one particular realization of the demand process with the corresponding controls and supplies at the demand nodes. Again, we provide the illustrations for the study using \textit{fmincon}. The results for the scenario with optimal parameters does not differ significantly. Comparing the controls we observe that they are generally larger in the setting with damping. Additionally, we observe jumps in the controls of MS2 which correspond to new demand information. The demand at node $v_2$ generally evolves lower than in mean and approaches 0 occaisonally. Therefore, the supply given by MS2 and MS3 fit demand much better than supply from the unupdated setting MS1. 
\begin{figure}[h!]
    \centering
    \includegraphics[width=\textwidth]{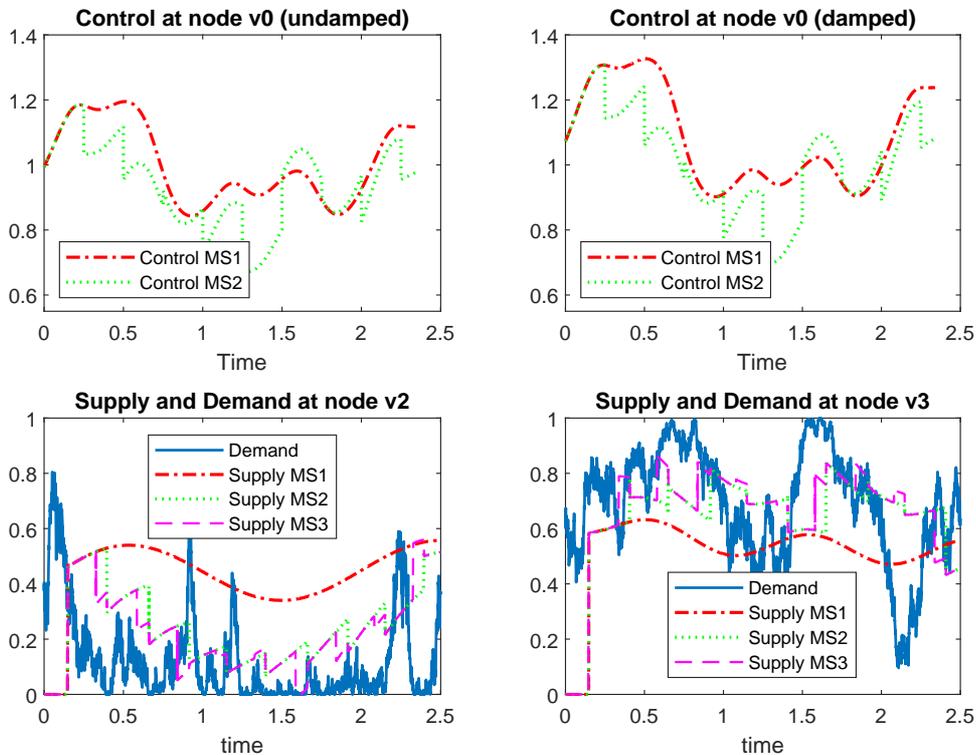}
    \caption{Control at the source node $v_0$ for the undamped scenario (left) and the damped scenario (right) and the supply in the three different settings plotted together with the actual demand from the second example.}
    \label{figSett2}
\end{figure}

To validate whether the update strategies provide smaller errors we again perform a Monte Carlo simulation of $N=500$ runs and compare the normalized RMSE using \textit{fmincon} and the optimal values in Table \ref{ErrTab28} and \ref{ErrTab29}. 
\begin{table}[htb!]
\centering
\begin{tabular}{l||c|c||c|c}
\hline
& normRMSE$_2^1$ & normRMSE$_3^1$ & normRMSE$_2^2$ & normRMSE$_3^2$ \\
\hline
\hline
MS1 & $0.3635$ & $0.3561$ &  $0.3635$ & $0.3561$\\
\hline
\hline
MS2 & $0.3175$ & $0.2612$ &  $0.3175$ & $0.2612$\\
\hline
\hline
MS3 & $0.3016$ & $0.2525$ & $0.3016$ & $0.2525$\\
\hline
\end{tabular}
\caption{normRMSE for the different Model Settings (MS) in the 1-2-network in example 2 using fmincon for the optimization problems.}
\label{ErrTab28}
\end{table}

\begin{table}[htb!]
\centering
\begin{tabular}{l||c|c||c|c}
\hline
& normRMSE$_2^1$ & normRMSE$_3^1$ & normRMSE$_2^2$ & normRMSE$_3^2$ \\
\hline
\hline
MS1 & $0.3579$ & $0.3488$ &  $0.3579$ & $0.3488$\\
\hline
\hline
MS2 & $0.3155$ & $0.2614$ &  $0.3155$ & $0.2614$\\
\hline
\hline
MS3 & $0.2928$ & $0.2517$ & $0.2928$ & $0.2517$\\
\hline
\end{tabular}
\caption{normRMSE for the different Model Settings (MS) in the 1-2-network in example 2 using the calculated optimal values for the inflow and the distribution parameters.}
\label{ErrTab29}
\end{table}
The error of the damped and undamped scenarios coincide and we observe, as expected, that the error reduces when more information is taken into account. 

We remark that the run-time for the implementation is mainly driven by the time that is required to solve the optimization problems using fmincon. For example, the computation for a sample of $N=100$ runs for the 1-2 network in the case of MS1 and MS2 takes about 220 minutes on a standard desktop PC with a CPU of 3.19 Ghz using the above chosen parameters. The calculation for MS3 scales approximately by factor 2 due to the additional optimization problem that has to be solved at node $v_1$. A simulation using the optimal parameters takes depending on the model setting between 20 and 40 seconds.

In Table \ref{errTab5} we show the evolution of the error reduction for varying the number of updates. Since this has no influence on MS1 we only consider MS2 and MS3. The tables show the reduction of the errors compared to a setting without updates. It can be observed that the errors are significantly reduced using more and more updates. But since the increases in the reduction get smaller when doubling the number of updates at a high level, it seems reasonable to assume that there is some base error that cannot be undercut.

\begin{table}[htb!]
\centering
\begin{tabular}{c|c}
\begin{tabular}{r||c|c}
\hline
$v_2$& MS2 & MS3 \\
\hline
\hline
no update  & $0.3579$ & $0.3579$\\
\hline
1 update & $2.71\%$ & $3.16\%$\\
\hline
2 updates & $4.86\%$ & $5.81\%$\\
\hline
3 updates  & $6.68\%$ & $8.19\%$\\
\hline
6 updates & $11.85\%$ & $14.47\%$\\
\hline
12 updates  & $17.83\%$ & $22.24\%$\\
\hline
24 updates & $22.10\%$ & $28.28\%$\\
\hline
48 updates  & $24.98\%$ & $32.33\%$\\
\hline
96 updates & $26.71\%$ & $34.81\%$\\
\hline
192 updates  & $27.71\%$ & $36.18\%$\\
\hline
384 updates  & $28.25\%$ & $36.94\%$\\
\hline
\end{tabular}
\begin{tabular}{r||c|c}
\hline
 $v_3$& MS2 & MS3 \\
\hline
\hline
no update  & $0.3488$ & $0.3488$\\
\hline
1 update  & $7.45\%$ & $8.20\%$\\
\hline
2 updates  & $12.79\%$ & $14.28\%$\\
\hline
3 updates  & $17.09\%$ & $18.98\%$\\
\hline
6 updates  & $25.06\%$ & $27.78\%$\\
\hline
12 updates  & $31.22\%$ & $34.86\%$\\
\hline
24 updates  & $36.87\%$ & $41.08\%$\\
\hline
48 updates  & $40.02\%$ & $44.47\%$\\
\hline
96 updates  & $41.89\%$ & $46.39\%$\\
\hline
192 updates  & $42.98\%$ & $47.50\%$\\
\hline
384 updates  & $43.46\%$ & $48.02\%$\\
\hline
\end{tabular}
\end{tabular}
\caption{Error reduction for normRMSE at $v_2$ and $v_3$ compared to the setting without updates.}
\label{errTab5}
\end{table}

\subsection{Comparison to an Ornstein-Uhlenbeck demand process}
Next, we perform a demand simulation using the Ornstein-Uhlenbeck process introduced in $(\ref{OU-Process})$ and compare it with the results we obtained from the Jacobi demand. The Ornstein-Uhlenbeck process can be also discretized by an Euler Maruyama-scheme on a time grid $(t_j)_{j \in \mathbb{N}}$ with $t_j + t_0 + j\Delta t$ for some $\Delta t>0$ small enough:
\begin{align*}
    \hat{Z}_{j+1} = \hat{Z}_j + \Delta t \hat{\kappa} (\hat{\theta}(t_j) - \hat{Z}_j) + \hat{\sigma} X_j,
\end{align*}
where $X_j$ is a realization of standard normal distributed random variable.

To compare the Jacobi demand with a demand generated from the Ornstein-Uhlenbeck process, we assume the same parameters for $\kappa, \theta$ and initial demand as in Table \ref{params2}. The stochastic perturbation size in the Jacobi process depends on $\sigma$ as well as the state of the process itself and can not be transferred analogously since the stochastic perturbation in the Ornstein-Uhlenbeck process is exclusively given by a factor $\hat{\sigma}$. We use $\hat{\sigma}^{(v_2)} = 0.14, \hat{\sigma}^{(v_3)} = 0.1$ for the Ornstein-Uhlenbeck process which represents a comparable choice for the intensity of the stochastic fluctuations.

\begin{figure}[h]
    \centering
    \includegraphics[width=\textwidth]{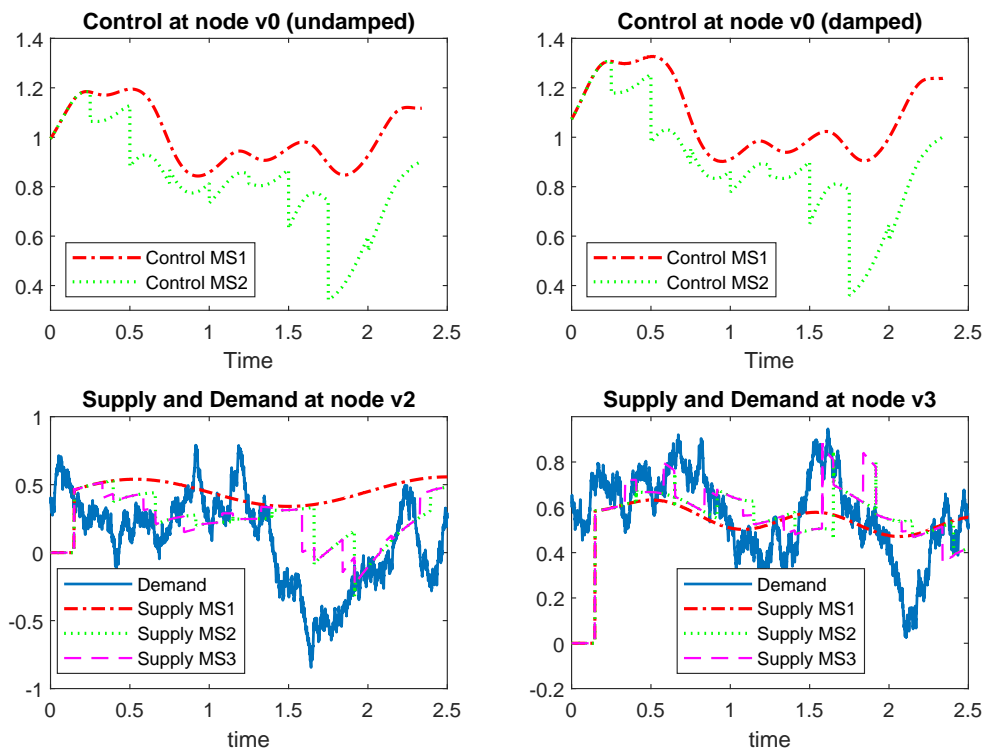}
    \caption{Control at the source node $v_0$ for the undamped scenario (left) and the damped scenario (right) and the supply in the three different settings plotted together with the actual demand from the third example using an Ornstein-Uhlenbeck process.}
    \label{figSett2OU}
\end{figure}

For the same realizations in the probability space as for the simulation that corresponds to Figure \ref{figSett2}, Figure \ref{figSett2OU} shows the Ornstein-Uhlenbeck demand process with the corresponding controls and supplies. In contrast to before, at node $v_2$ negative demand occurs around $t=1.5$, and even negative supplies around $t=2$, due to the unboundedness of the Ornstein-Uhlenbeck process. This effect does not have a natural interpretation and shows the drawbacks that may arise using the Ornstein-Uhlenbeck process as a demand process. Since we work with the same realizations in the probability space as in the previous example for the Jacobi process, we can compare both demands and observe that the Jacobi process also shows low demands in this area, but due to its characteristics always stays non-negative.

\section{Conclusion}
In this work, we have derived an explicit optimal control strategy, depending on the level of information, for supply networks with uncertain demand. The properties of the Jacobi process, in contrast to a demand governed by an Ornstein-Uhlenbeck process, are used to guarantee a reasonable interpretation of demand and supply in the case of supply networks. An explicit representation of the optimal input and the distribution parameters allows for a suitable and efficient numerical treatment. Numerical examples illustrate the main characteristics of the optimal control problem.

Future work includes the consideration of transport networks with nonlinear dynamics and uncertain demand using the Lax-Hopf technique \cite{Claudel2020b}.

\section*{Declarations}

\paragraph{Funding}
This work was supported by the DAAD project "Stochastic dynamics for complex networks and systems" (Project-ID 5744394).

\paragraph{Conflict of interest}
The authors declare no competing interests.

\paragraph{Author contribution}
All authors have contributed equally to this article.

%\paragraph{Acknowledgement}
%This work was supported by the DAAD project "Stochastic dynamics for complex networks and systems" (Project-ID 5744394).

\section{Appendix}
We present the detailed calculation of the second moment for a Jacobi process with time-varying mean reversion level given by the SDE
\begin{align}
\label{SDE_theta}
    dZ_t = \kappa (\theta(t) - Z_t)dt + \sigma \sqrt{Z_t(1-Z_t)}dW_t.
\end{align}
    
    \begin{lemma}
     Let $(\bar{\theta}_n)_{n \in \mathbb{N}}$ be a sequence of step functions converging uniformly to a function $\theta \in C^1({[t_0,T]})$. Additionally, let $\underset{n \rightarrow \infty}{\lim} t_n = T$. Then, the conditional second moment for the solution of $(\ref{SDE_theta})$ is given by
    \begin{align*}
    \mathbb{E}[Z_T^2 | Z_{t_0}=z_0] 
    &=  \int_{t_0}^T (2\kappa \theta(s) + \sigma^2)\left(z_0e^{-\kappa(s-t_0)} + \kappa \int_{t_0}^s  \theta(r)e^{-\kappa(s-r)}dr \right)e^{-(2\kappa + \sigma^2)(T-s)}ds \\
    &~~~~+ z_0^2e^{-(2\kappa + \sigma^2)(T-t_0)}.
        \end{align*}
    \end{lemma}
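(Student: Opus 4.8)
The plan is to derive a linear ODE for the conditional second moment $M(t) := \mathbb{E}[Z_t^2 \mid Z_{t_0} = z_0]$ and solve it explicitly, using the already-established formula for the conditional first moment $m(t) := \mathbb{E}[Z_t \mid Z_{t_0} = z_0]$ from $(\ref{condfirstMomentTimevar})$. First I would apply It\^o's formula to $Z_t^2$ for the SDE $(\ref{SDE_theta})$: since $d(Z_t^2) = 2 Z_t\, dZ_t + d\langle Z\rangle_t$ and $d\langle Z\rangle_t = \sigma^2 Z_t(1-Z_t)\, dt$, we get
\begin{align*}
    d(Z_t^2) = \left(2\kappa \theta(t) Z_t - 2\kappa Z_t^2 + \sigma^2 Z_t - \sigma^2 Z_t^2 \right) dt + 2\sigma Z_t \sqrt{Z_t(1-Z_t)}\, dW_t.
\end{align*}
The stochastic integral is a martingale because its integrand is bounded (recall $Z_t \in [0,1]$, so $|2\sigma Z_t\sqrt{Z_t(1-Z_t)}| \le \sigma$), hence its conditional expectation vanishes just as in the first-moment computation earlier in the section. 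Taking conditional expectations and interchanging with the time integral yields the linear inhomogeneous ODE
\begin{align*}
    \frac{d}{dt} M(t) = -(2\kappa + \sigma^2) M(t) + (2\kappa \theta(t) + \sigma^2) m(t), \qquad M(t_0) = z_0^2,
\end{align*}
where $m(t) = z_0 e^{-\kappa(t-t_0)} + \kappa \int_{t_0}^t e^{-\kappa(s-t_0)}\theta(s)\, ds$ is exactly $(\ref{condfirstMomentTimevar})$.

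Next I would solve this ODE by the integrating-factor method: multiplying by $e^{(2\kappa+\sigma^2)t}$ and integrating from $t_0$ to $T$ gives
\begin{align*}
    M(T) = z_0^2 e^{-(2\kappa+\sigma^2)(T-t_0)} + \int_{t_0}^T (2\kappa\theta(s) + \sigma^2)\, m(s)\, e^{-(2\kappa+\sigma^2)(T-s)}\, ds,
\end{align*}
and substituting the expression for $m(s)$ produces precisely the claimed formula. This is the heart of the argument; the remaining work is bookkeeping.

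The one genuinely delicate point — and the reason the lemma is stated with the step-function hypothesis $(\bar\theta_n)$ converging uniformly to $\theta$ with $t_n \to T$ — is the justification of the It\^o/Fubini manipulations and, more fundamentally, the existence and regularity needed to even write down $M(t)$ for a merely continuous (rather than piecewise-constant) mean reversion level. So the main obstacle I anticipate is a limiting argument: for each piecewise-constant $\bar\theta_n$ one has a genuine Jacobi process on each subinterval (to which the constant-$\theta$ theory and the explicit moment formulas $(\ref{condfirstMoment})$–$(\ref{condSecMoment})$ apply piecewise), giving a validated formula $M_n(T)$; one then shows $Z^{(n)}_t \to Z_t$ in $L^2$ (via a Gr\"onwall estimate on $\mathbb{E}[(Z^{(n)}_t - Z_t)^2]$ using Lipschitz-type bounds on the drift and the H\"older-$\tfrac12$ diffusion coefficient on $[0,1]$, together with $\sup_n\|\bar\theta_n - \theta\|_\infty \to 0$), and finally passes to the limit in the explicit integral formula using uniform convergence of $\bar\theta_n$ and dominated convergence. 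The bounded state space $[0,1]$ is what makes all the relevant quantities uniformly bounded and keeps these estimates clean. Once the limit is taken, the formula is continuous in $\theta$ in the appropriate sense, so the piecewise result upgrades to the stated one, and I would note at the end that this is consistent with $(\ref{condSecMoment})$ in the special case $\theta(t) \equiv \theta$.
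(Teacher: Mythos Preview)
Your argument is correct, but it takes a genuinely different route from the paper. The paper never writes down an ODE for $M(t)$; instead it works entirely from the constant-$\theta$ second moment formula $(\ref{condSecMoment})$, proves by induction on the number of pieces an explicit (and rather lengthy) closed-form expression for $\mathbb{E}[Z_{t_n}^2\mid Z_{t_0}=z_0]$ when $\theta$ is the step function $(\ref{thetastep})$, and then takes the limit $n\to\infty$ by recognising the resulting sums as Riemann sums for the integrals in the claimed formula. In the paper's approach the step-function hypothesis is the engine of the proof, not a technical afterthought.

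Your It\^o-plus-integrating-factor derivation is considerably shorter and more transparent: it explains immediately why the answer is the variation-of-constants solution of a first-order linear ODE with forcing $(2\kappa\theta(t)+\sigma^2)m(t)$, and the boundedness of $Z_t\in[0,1]$ makes the martingale and Fubini steps unproblematic. What the paper's approach buys in exchange for the algebra is that it never needs a fresh It\^o computation or any analytical properties of the time-inhomogeneous SDE beyond the Markov property; everything is reduced to repeated application of the already-established constant-$\theta$ result, which is perhaps closer in spirit to how the lemma's hypotheses are phrased.
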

    
    \begin{proof}
    For a constant mean reversion level the conditional second moment is presented in equation (\ref{condSecMoment}). The idea of the proof is to use this expression to find a representation of the second moment for piecewise constant mean reversion levels and then use a uniform limit to show the result for continuously differentiable functions $\theta$. First, assume that for $t_0 < t_1< \ldots < t_n$ on a bounded interval $[t_0,t_n]$, $\theta$ is a step-function, i.e.
    \begin{align}
    \label{thetastep}
        \theta(t) = \sum_{i=0}^{n-1} \theta_i \mathbbm{1}_{[t_i,t_{i+1})}(t)
    \end{align}
    for $\theta_i \in \mathbb{R}$.
    For $n=1$, we obtain the the conditional second moment as in $(\ref{condSecMoment})$. We use an induction to calculate the conditional second moment for an arbitrary $n+1 \in \mathbb{N}$ assuming that the conditional second moment is known for a step-function with $n$ steps is given by
    \begin{align}
    \begin{split}
    \label{scndmm}
    &\mathbb{E}[Z_{t_n}^2 | Z_{t_0} = z_0] =  \sum_{i=0}^{n-1}\Bigg[\frac{(2 \kappa \theta_i + \sigma^2)\theta_i}{2 \kappa + \sigma^2} e^{-(2\kappa + \sigma^2)(t_n - t_{i+1})} + \frac{\kappa \theta_i (2 \kappa \theta_i + \sigma^2)}{(2\kappa + \sigma^2)(\kappa + \sigma^2)} e^{-(2\kappa + \sigma^2)(t_n - t_{i})}  \Bigg]\\
    &~~~~ + \sum_{i=1}^{n-1} \Bigg[ \frac{2\kappa \theta_i + \sigma^2}{\kappa + \sigma^2} \Bigg(\sum_{j=1}^{i} (\theta_{j-1} - \theta_j)e^{-\kappa(t_{i+1} - t_j)-(2\kappa + \sigma^2)(t_n - t_{i+1})}  -\theta_{i-1}e^{-(2\kappa + \sigma^2)(t_n - t_i)} \\
    &~~~~ - \sum_{j=1}^{i-1}(\theta_{j-1} - \theta_j)e^{-\kappa(t_i - t_j) - (2\kappa + \sigma^2)(t_n - t_{i})}  \Bigg)\Bigg] \\
    & ~~~~ + \sum_{i=1}^{n-1} \Bigg[\frac{2\kappa \theta_i + \sigma^2}{\kappa + \sigma^2}(z_0-\theta_0)\left(e^{-\kappa(t_{i+1} - t_0)-(2\kappa + \sigma^2)(t_n - t_{i+1})}  -e^{-\kappa(t_i - t_0)-(2\kappa + \sigma^2)(t_n - t_{i})}\right)\Bigg] \\
    &~~~~+\frac{2\kappa \theta_0 + \sigma^2}{\kappa + \sigma^2} \left((z_0 - \theta_0)e^{-\kappa(t_1 - t_0) - (2\kappa + \sigma^2)(t_n-t_1)} -z_0 e^{-(2\kappa + \sigma^2)(t_n-t_0)}\right) + z_0^2e^{-(2\kappa + \sigma^2)(t_n - t_0)}.
    \end{split}
    \end{align}
    
    Then the induction step to $n+1$ reads
    \begin{align*}
    &\mathbb{E}\left[Z_{t_{n+1}}^2 | Z_{t_0}= z_0 \right] = \mathbb{E}\left[Z_{t_{n+1}}^2 - Z_{t_{n}}^2 + Z_{t_{n}}^2 | Z_{t_0}= z_0 \right]
    = \mathbb{E}\left[\mathbb{E}\left[Z_{t_{n+1}}^2  | Z_{t_{n}} = Z_{n}\right] ~| Z_{t_0}= z_0 \right] \\
    &= \mathbb{E} \Bigg[\frac{(2\kappa \theta_{n} + \sigma^2)\theta_{n}}{2\kappa + \sigma^2} + \frac{2\kappa \theta_{n} + \sigma^2}{\kappa + \sigma^2}(Z_{n} - \theta_{n}) e^{-\kappa(t_{n+1}-t_{n})} \\
    &~~~~+ \left(Z_{n}^2 - \frac{2\kappa\theta_{n} + \sigma^2}{\kappa + \sigma^2}Z_{n} + \frac{\kappa \theta_{n} (2\kappa \theta_{n} + \sigma^2)}{(2\kappa + \sigma^2)(\kappa + \sigma^2)}  \right)e^{-(2\kappa + \sigma^2)(t_{n+1}-t_{n})}  \Big| Z_{t_0}=z_0  \Bigg]  \\
    &= \frac{(2\kappa \theta_n + \sigma^2)\theta_n}{2\kappa + \sigma^2}+ \frac{2\kappa \theta_n + \sigma^2}{\kappa + \sigma^2} \bigg(\sum_{j=1}^n (\theta_{j-1} - \theta_j)e^{-\kappa(t_{n+1} - t_j)}  \bigg) \\
    &~~~~- \frac{2\kappa \theta_n + \sigma^2}{\kappa + \sigma^2}\bigg(\theta_{n-1}e^{-(2\kappa + \sigma^2)(t_{n+1} - t_n)}+ \sum_{j=1}^{n-1} (\theta_{j-1} - \theta_j)e^{-\kappa(t_{n}-t_j) - (2\kappa + \sigma^2)(t_{n+1} - t_n)} \bigg) \\
    &~~~~+ \frac{\kappa\theta_n(2\kappa \theta_n + \sigma^2)}{(2\kappa + \sigma)(\kappa + \sigma^2)} e^{-(2\kappa + \sigma^2)(t_{n+1} - t_n)} + \frac{2\kappa \theta_n + \sigma^2}{\kappa + \sigma^2}\left(z_0-\theta_0\right)\left(e^{-\kappa(t_{n+1} - t_0)} - e^{-\kappa(t_{n} - t_0)}\right) \\
    &~~~~ + \Bigg[\sum_{i=0}^{n-1}\Bigg[\frac{(2 \kappa \theta_i + \sigma^2)\theta_i}{2 \kappa + \sigma^2} e^{-(2\kappa + \sigma^2)(t_n - t_{i+1})} + \frac{\kappa \theta_i (2 \kappa \theta_i + \sigma^2)}{(2\kappa + \sigma^2)(\kappa + \sigma^2)} e^{-(2\kappa + \sigma^2)(t_n - t_{i})}  \Bigg]\\ 
    &~~~~ + \sum_{i=1}^{n-1} \Bigg[ \frac{2\kappa \theta_i + \sigma^2}{\kappa + \sigma^2} \Big(\sum_{j=1}^{i} (\theta_{j-1} - \theta_j)e^{-\kappa(t_{i+1} - t_j)-(2\kappa + \sigma^2)(t_n - t_{i+1})} \Big) \\
    &~~~~- \frac{2\kappa \theta_i + \sigma^2}{\kappa + \sigma^2} \Big(\theta_{i-1}e^{-(2\kappa + \sigma^2)(t_n - t_i)}+ \sum_{j=1}^{i-1}(\theta_{j-1} - \theta_j)e^{-\kappa(t_i - t_j) - (2\kappa + \sigma^2)(t_n - t_{i})} \Big) \Bigg] \\
    & ~~~~ + \sum_{i=1}^{n-1} \Bigg[\frac{2\kappa \theta_i + \sigma^2}{\kappa + \sigma^2} \Bigg((z_0-\theta_0)\left(e^{-\kappa(t_{i+1} - t_0)-(2\kappa + \sigma^2)(t_n - t_{i+1})} -e^{-\kappa(t_i - t_0)-(2\kappa + \sigma^2)(t_n - t_{i})}\right) \Bigg) \Bigg]\\
    &~~~~ +\frac{2\kappa \theta_0 + \sigma^2}{\kappa + \sigma^2} \left((z_0 - \theta_0)e^{-\kappa(t_1 - t_0) - (2\kappa + \sigma^2)(t_n-t_1)} -z_0 e^{-(2\kappa + \sigma^2)(t_n-t_0)}\right) \\
    &~~~~+ z_0^2e^{-(2\kappa + \sigma^2)(t_n - t_0)} \Bigg]e^{-(2\kappa + \sigma^2)(t_{n+1} - t_{n})}.
    \end{align*}
    Summarizing, we obtain the proposed equation ($\ref{scndmm}$). As a last step we calculate the limit for $n \rightarrow \infty$.
    \begin{align*}
    &\mathbb{E}[Z_{T}^2 | Z_{t_0} = z_0] \\&= \underset{n \rightarrow \infty}{\lim} ~ \sum_{i=0}^{n-1}\Bigg[\frac{(2 \kappa \theta_i + \sigma^2)\theta_i}{2 \kappa + \sigma^2} e^{-(2\kappa + \sigma^2)(t_n - t_{i+1})} + \frac{\kappa \theta_i (2 \kappa \theta_i + \sigma^2)}{(2\kappa + \sigma^2)(\kappa + \sigma^2)} e^{-(2\kappa + \sigma^2)(t_n - t_{i})}  \Bigg]\\
    &~~~~ + \sum_{i=1}^{n-1} \Bigg[ \frac{2\kappa \theta_i + \sigma^2}{\kappa + \sigma^2} \Bigg(\sum_{j=1}^{i} (\theta_{j-1} - \theta_j)e^{-\kappa(t_{i+1} - t_j)-(2\kappa + \sigma^2)(t_n - t_{i+1})}  -\theta_{i-1}e^{-(2\kappa + \sigma^2)(t_n - t_i)} \\
    &~~~~ - \sum_{j=1}^{i-1}(\theta_{j-1} - \theta_j)e^{-\kappa(t_i - t_j) - (2\kappa + \sigma^2)(t_n - t_{i})}  \Bigg)\Bigg] \\
    & ~~~~ + \sum_{i=1}^{n-1} \Bigg[\frac{2\kappa \theta_i + \sigma^2}{\kappa + \sigma^2}(z_0-\theta_0)\left(e^{-\kappa(t_{i+1} - t_0)-(2\kappa + \sigma^2)(t_n - t_{i+1})}  -e^{-\kappa(t_i - t_0)-(2\kappa + \sigma^2)(t_n - t_{i})}\right)\Bigg] \\
    &~~~~+\frac{2\kappa \theta_0 + \sigma^2}{\kappa + \sigma^2} \left((z_0 - \theta_0)e^{-\kappa(t_1 - t_0) - (2\kappa + \sigma^2)(t_n-t_1)} -z_0 e^{-(2\kappa + \sigma^2)(t_n-t_0)}\right) \\ 
    &~~~~+ z_0^2e^{-(2\kappa + \sigma^2)(t_n - t_0)}\\  
    &= \frac{(2 \kappa \theta(t_0) + \sigma^2)\theta(t_0)}{2 \kappa + \sigma^2} \left(1+ \frac{\kappa}{\kappa + \sigma^2} \right) e^{-(2\kappa + \sigma^2)(T - t_{0})}  - \frac{2\kappa \theta(t_0) + \sigma^2}{\kappa + \sigma^2} \theta(t_0)e^{-(2\kappa + \sigma^2)(T - t_{0})} \\
    &~~~~+ \underset{n \rightarrow \infty}{\lim} \Bigg[ \sum_{i=1}^{n-1}\Bigg[\frac{2 \kappa \theta_i + \sigma^2}{(2 \kappa + \sigma^2)(\kappa + \sigma^2)} \bigg(  (\kappa + \sigma^2) \Big(\theta_i e^{-(2 \kappa + \sigma^2)(t_n - t_{i+1})} - \theta_{i-1} e^{-(2 \kappa + \sigma^2)(t_n - t_{i})}\Big) \\
    &~~~~ + \kappa \left( \theta_i - \theta_{i-1}  \right)e^{-(2\kappa + \sigma^2)(t_n - t_i)} \bigg)  \Bigg]+ \sum_{i=1}^{n-1}\frac{2\kappa \theta_i + \sigma^2}{\kappa + \sigma^2} (\theta_{i-1} - \theta_i)e^{ - (2\kappa + \sigma^2)(t_n - t_{i+1})} \\
    &~~~~ +  \sum_{i=1}^{n-1} \frac{2\kappa \theta_i + \sigma^2}{\kappa + \sigma^2} \sum_{j=1}^{i-1}(\theta_{j-1} - \theta_j)\left(e^{-\kappa(t_{i+1} - t_j) - (2\kappa + \sigma^2)(t_n - t_{i+1})} - e^{-\kappa(t_{i} - t_j) - (2\kappa + \sigma^2)(t_n - t_{i})} \right)  \\
    &~~~~ + \sum_{i=1}^{n-1} \Bigg[\frac{2\kappa \theta_i + \sigma^2}{\kappa + \sigma^2}(z_0-\theta_0)\left( e^{-\kappa(t_{i+1} - t_0)-(2\kappa + \sigma^2)(t_n - t_{i+1})} - e^{-\kappa(t_{i} - t_0)-(2\kappa + \sigma^2)(t_n - t_{i})} \right)\Bigg]\Bigg] \\
    &~~~~+ z_0^2e^{-(2\kappa + \sigma^2)(T - t_0)}\\
    &= \frac{(2 \kappa \theta(t_0) + \sigma^2)\theta(t_0)}{\kappa + \sigma^2}  e^{-(2\kappa + \sigma^2)(T - t_{0})}- \frac{2\kappa \theta(t_0) + \sigma^2}{\kappa + \sigma^2}\theta(t_0)e^{-(2\kappa + \sigma^2)(T - t_{0})} \\
    &~~~~+ \int_{t_0}^T\frac{2 \kappa \theta(s) + \sigma^2}{(2 \kappa + \sigma^2)(\kappa + \sigma^2)}  \left( (2\kappa + \sigma^2) \left( \theta'(s) + (2\kappa + \sigma^2)\theta(s)\right) \right)e^{-(2\kappa + \sigma^2)(T-s)}ds \\
    &~~~~ + \int_{t_0}^T \frac{2\kappa \theta(s) + \sigma^2}{\kappa + \sigma^2} \left(-\theta'(s) + \int_{t_0}^s -\theta'(r)(\kappa + \sigma^2)e^{-\kappa(s-r)}dr \right)e^{ - (2\kappa + \sigma^2)(T - s)}ds\\
    &~~~~+ \int_{t_0}^T (2\kappa \theta(s) + \sigma^2)(z_0-\theta(t_0)) e^{-\kappa(s - t_0)-(2\kappa + \sigma^2)(T - s)} ds+ z_0^2e^{-(2\kappa + \sigma^2)(T - t_0)}\\
    &= \int_{t_0}^T (2\kappa \theta(s) + \sigma^2)\Big(\theta(s) - \int_{t_0}^s \theta'(r)e^{-\kappa(s-r)}dr  +(z_0 - \theta(t_0))e^{-\kappa(s-t_0)}  \Big)e^{-(2\kappa + \sigma^2)(T-s)} ds\\
    &~~~~+ z_0^2 e^{-(2\kappa + \sigma^2)(T - t_0)}\\
    &= \int_{t_0}^T (2\kappa \theta(s) + \sigma^2)\left(z_0e^{-\kappa(s-t_0)} + \kappa \int_{t_0}^s  \theta(r)e^{-\kappa(s-r)}dr\right) e^{-(2\kappa + \sigma^2)(T-s)}ds  \\
    &~~~~+z_0^2  e^{-(2\kappa + \sigma^2)(T-t_0)} \qedhere
    \end{align*}
    
    \end{proof}

\printbibliography

@article {Lux2021,
    AUTHOR = {G\"{O}TTLICH, S. and KOLB, O. and LUX, K.},
     TITLE = {Chance-constrained optimal inflow control in hyperbolic supply
              systems with uncertain demand},
   JOURNAL = {Optimal Control Appl. Methods},
  %FJOURNAL = {Optimal Control Applications \& Methods},
    VOLUME = {42},
      YEAR = {2021},
    NUMBER = {2},
     PAGES = {566--589},
   %   ISSN = {0143-2087},
%   MRCLASS = {90B30 (49J55 49M25 93-08 93C20)},
%  MRNUMBER = {4230928},
%       DOI = {10.1002/oca.2689},
%       URL = {https://doi.org/10.1002/oca.2689},
}

@article {Gottlich2005,
    AUTHOR = {G\"{O}TTLICH, S. and HERTY, M. and KLAR, A.},
     TITLE = {Network models for supply chains},
   JOURNAL = {Commun. Math. Sci.},
 % FJOURNAL = {Communications in Mathematical Sciences},
    VOLUME = {3},
      YEAR = {2005},
    NUMBER = {4},
     PAGES = {545--559},
%      ISSN = {1539-6746},
%   MRCLASS = {90B10 (90B50)},
%  MRNUMBER = {2188683},
%       URL = {http://projecteuclid.org/euclid.cms/1144429331},
}

@book {Ford1962,
    AUTHOR = {FORD, Jr., L. R. and FULKERSON, D. R.},
     TITLE = {Flows in networks},
 PUBLISHER = {Princeton University Press, Princeton, N.J.},
      YEAR = {1962},
     PAGES = {xii+194},
%   MRCLASS = {90.30},
%  MRNUMBER = {0159700},
%MRREVIEWER = {S. W. Golomb},
}

@article {Banda2006,
    AUTHOR = {BANDA, M. and HERTY, M. and KLAR, A.},
     TITLE = {Gas flow in pipeline networks},
   JOURNAL = {Netw. Heterog. Media},
%  FJOURNAL = {Networks and Heterogeneous Media},
    VOLUME = {1},
      YEAR = {2006},
    NUMBER = {1},
     PAGES = {41--56},
%      ISSN = {1556-1801},
%   MRCLASS = {76N15 (35L05 76N10)},
%  MRNUMBER = {2219276},
%MRREVIEWER = {Tong Li},
%       DOI = {10.3934/nhm.2006.1.41},
%       URL = {https://doi.org/10.3934/nhm.2006.1.41},
}

@article {Leugering2002,
    AUTHOR = {LEUGERING, G. and SCHMIDT, E. J. P. G.},
     TITLE = {On the modelling and stabilization of flows in networks of
              open canals},
   JOURNAL = {SIAM J. Control Optim.},
%  FJOURNAL = {SIAM Journal on Control and Optimization},
    VOLUME = {41},
      YEAR = {2002},
 %   NUMBER = {1},
 %    PAGES = {164--180},
 %     ISSN = {0363-0129},
 %  MRCLASS = {76B75 (35L45 35L50 35L65 93C20)},
 % MRNUMBER = {1920161},
%MRREVIEWER = {Alexander Yurjevich Chebotarev},
  %     DOI = {10.1137/S0363012900375664},
  %     URL = {https://doi.org/10.1137/S0363012900375664},
}

@book {Garavello2016,
    AUTHOR = {GARAVELLO, M. and PICCOLI, B.},
     TITLE = {Traffic flow on networks},
    SERIES = {AIMS Series on Applied Mathematics},
    VOLUME = {1},
%      NOTE = {Conservation laws models},
 PUBLISHER = {American Institute of Mathematical Sciences (AIMS),
              Springfield, MO},
      YEAR = {2006},
     PAGES = {xvi+243},
%      ISBN = {978-1-60133-000-0; 1-60133-000-6},
 %  MRCLASS = {90B20 (35L65 35L67 90-02)},
 % MRNUMBER = {2328174},
%MRREVIEWER = {Giuseppe Maria Coclite},
}

@book{Koch2015,
author = {KOCH, T. and HILLER, B. and PFETSCH, M. E. and SCHEWE, L.},
%editor = {Marc E. Pfetsch and Thorsten Koch and Lars Schewe and Benjamin Hiller},
title = {Evaluating Gas Network Capacities},
publisher = {Society for Industrial and Applied Mathematics},
year = {2015},
%doi = {10.1137/1.9781611973693},
%address = {Philadelphia, PA},
%edition   = {},
%URL = {https://epubs.siam.org/doi/abs/10.1137/1.9781611973693},
%eprint = {https://epubs.siam.org/doi/pdf/10.1137/1.9781611973693}
}

@book {Ahuja1993,
    AUTHOR = {AHUJA, R. K. and MAGNANTI, T. L. and ORLIN, J.
              B.},
     TITLE = {Network flows},
%      NOTE = {Theory, algorithms, and applications},
 PUBLISHER = {Prentice Hall, Inc., Englewood Cliffs, NJ},
      YEAR = {1993},
     PAGES = {xvi+846},
%      ISBN = {0-13-617549-X},
%   MRCLASS = {90B10 (90-01 90-02 90C35)},
%  MRNUMBER = {1205775},
%MRREVIEWER = {Jacques A. Ferland},
}

@article {Bressan2014,
    AUTHOR = {BRESSAN, A. and \v{C}ANI\'{C}, S. and GARAVELLO, M. and
              HERTY, M. and PICCOLI, B.},
     TITLE = {Flows on networks: recent results and perspectives},
   JOURNAL = {EMS Surv. Math. Sci.},
  %FJOURNAL = {EMS Surveys in Mathematical Sciences},
    VOLUME = {1},
      YEAR = {2014},
  %  NUMBER = {1},
     PAGES = {47--111},
   %   ISSN = {2308-2151},
  % MRCLASS = {35L60 (35L50 35L65 90B10 90B20 93C23)},
  %MRNUMBER = {3200227},
%MRREVIEWER = {Kayyunnapara Thomas Joseph},
  %     DOI = {10.4171/EMSS/2},
 %      URL = {https://doi.org/10.4171/EMSS/2},
}

@article{Sorensen,
author = {SØRENSEN, M. and FORMAN, J.},
year = {2007},
%month = {10},
pages = {438-465},
title = {The Pearson Diffusions: A Class of Statistically Tractable Diffusion Processes},
volume = {35},
journal = {Scandinavian Journal of Statistics},
%doi = {10.1111/j.1467-9469.2007.00592.x}
}

@article{Filipovic,
author = {FILIPOVI\'{C}, D. and LARSSON, M. and WARE, A.},
year = {2017},
%month = {10},
%pages = {},
title = {Polynomial Processes for Power Prices},
journal = {SSRN Electronic Journal},
%doi = {10.2139/ssrn.3170978}
}

@Inbook{Korn,
author={COSKUN, S.
and KORN, R.},
%editor="G{\"o}ttlich, Simone
%and Herty, Michael
%and Milde, Anja",
title={Modeling the Intraday Electricity Demand in Germany},
bookTitle={Mathematical Modeling, Simulation and Optimization for Power Engineering and Management},
year={2021},
publisher={Springer International Publishing},
address={Cham},
pages={3-23},
%abstract="Future electricity markets face new challenges such as increasing variation in supply due to the dominance of renewable energy providers or variation in demand due to the presence of price sensitive customers. In this contribution, we survey the first step to modeling the current demand process for electricity in Germany. Besides standard affine-linear diffusion processes, we aim to model the intraday electricity demand via a Jacobi process that has attractive properties for our applications. Further, we demonstrate the usefulness of the new models by conducting a comprehensive data analysis.",
%isbn="978-3-030-62732-4",
%doi="10.1007/978-3-030-62732-4_1",
%url="https://doi.org/10.1007/978-3-030-62732-4_1"
}

@article{Lux1,
author = {G\"OTTLICH, S. and KORN, R. and LUX, K.},
year = {2019},
%month = {12},
pages = {1-28},
title = {Optimal control of electricity input given an uncertain demand},
volume = {90},
journal = {Mathematical Methods of Operations Research},
%doi = {10.1007/s00186-019-00678-6}
}

@article{Lux2,
          %editor = {Istv{\'a}n Farag{\'o}},
         %address = {Cham},
          author = {S. G\"OTTLICH and R. KORN and K. LUX},
          %   doi = {10.1007/978-3-030-27550-1\_61},
         journal = {Mathematics in Industry},
           title = {Optimal inflow control penalizing undersupply in transport systems with uncertain demands},
       publisher = {Springer},
            year = {2019},
       % language = {Englisch},
           pages = {485-490},
       %booktitle = {Progress in industrial mathematics at ECMI 2018 : The 20th European Conference on Mathematics for Industry},
          volume = {30},
        %     url = {https://madoc.bib.uni-mannheim.de/55250/}
}

@article{Delbaen,
author = {DELBAEN, F. and SHIRAKAWA, H.},
year = {2002},
%month = {09},
pages = {191-209},
title = {An Interest Rate Model with Upper and Lower Bounds},
volume = {9},
journal = {Asia-Pacific Financial Markets},
%doi = {10.1023/A:1024125430287}
}

@article{Dohi,
title = {Optimal production planning under diffusion demand pattern},
journal = {Mathematical and Computer Modelling},
volume = {21},
number = {11},
pages = {35-46},
year = {1995},
%issn = {0895-7177},
%doi = {https://doi.org/10.1016/0895-7177(95)00077-F},
%url = {https://www.sciencedirect.com/science/article/pii/089571779500077F},
author = {T. DOHI and N. KAIO and S. OSAKI},
%keywords = {Production planning problem, Stochastic control, Quadratic control, Stochastic differential equation, Feedback production policy},
%abstract = {Many studies about the production management systems have dealt with demand under uncertainty, which is regarded as a time-independent random variable. In this paper, we discuss the production management systems when demand follows Markov diffusion process. The stochastic control method is applied to the analytical derivation of the optimal production policies. The main purpose of this paper is to extend a deterministic model in the earlier contributions to the stochastic one and further to clear the properties of optimal control in the production management systems, considering the differences of the shapes between two kinds of demand distributions.}
}

@article{Ouaret,
author = {OUARET, S. and POLOTSKI, V. and KENNE, J.-P. and GHARBI, A.},
year = {2013},
%month = {03},
pages = {550-559,},
title = {Optimal Production Control of Hybrid Manufacturing/Remanufacturing Failure-Prone Systems under Diffusion-Type Demand},
volume = {04},
journal = {Applied Mathematics},
%doi = {10.4236/am.2013.43079}
}

@article{JacobiG,
title = {Multivariate Jacobi process with application to smooth transitions},
journal = {Journal of Econometrics},
volume = {131},
number = {1},
pages = {475-505},
year = {2006},
%issn = {0304-4076},
%doi = {https://doi.org/10.1016/j.jeconom.2005.01.014},
%url = {https://www.sciencedirect.com/science/article/pii/S0304407605000199},
author = {C. GOURIEROUX and J. JASIAK},

}

@article{ADMProduction,
author = {ARMBRUSTER, D. and DEGOND, P. and RINGHOFER, C.},
year = {2006},
%month = {01},
pages = {896-920},
title = {A Model for the Dynamics of large Queuing Networks and Supply Chains},
volume = {66},
journal = {SIAM Journal of Applied Mathematics},
%doi = {10.1137/040604625}
}

@book{Apice,
author = {D'APICE, C. and G\"OTTLICH, S. and HERTY, M. and PICCOLI, B.},
title = {Modeling, Simulation, and Optimization of Supply Chains},
publisher = {Society for Industrial and Applied Mathematics},
year = {2010},
%doi = {10.1137/1.9780898717600},
%address = {},
%edition   = {},
%URL = {https://epubs.siam.org/doi/abs/10.1137/1.9780898717600},
%eprint = {https://epubs.siam.org/doi/pdf/10.1137/1.9780898717600}
}

@article{Strom1,
author = {G\"OTTLICH, S. and HERTY, M. and SCHILLEN, P.},
year = {2015},
%month = {01},
pages = {980– 995},
title = {Electric transmission lines: Control and numerical discretization},
journal = {Optimal Control Applications and Methods},
%doi = {10.1002/oca.2219}
}

@Inbook{Gas1,
author={BENNER, P.
and GRUNDEL, S.
and HIMPE, C.
and HUCK, C.
and STREUBEL, T.
and TISCHENDORF, C.},
%editor={Campbell, S. and Ilchmann, A. and Mehrmann, V. and Reis, T."}
title={Gas Network Benchmark Models},
bookTitle={Applications of Differential-Algebraic Equations: Examples and Benchmarks},
year={2019},
publisher={Springer International Publishing},
address={Cham},
pages={171-197},
%abstract="The simulation of gas transportation networks becomes increasingly more important as its use-cases broaden to more complex applications. Classically, the purpose of the gas network was the transportation of predominantly natural gas from a supplier to the consumer for long-term scheduled volumes. With the rise of renewable energy sources, gas-fired power plants are often chosen to compensate for the fluctuating nature of the renewables, due to their on-demand power generation capability. Such an only short-term plannable supply and demand setting requires sophisticated simulations of the gas network prior to the dispatch to ensure the supply of all customers for a range of possible scenarios and to prevent damages to the gas network. In this work we describe the modeling of gas networks and present benchmark systems to test implementations and compare new or extended models.",
%isbn="978-3-030-03718-5",
%doi="10.1007/11221_2018_5",
%url="https://doi.org/10.1007/11221_2018_5"
}

@article{ProductionQueue,
title = {Queueing theory in manufacturing systems analysis and design: A classification of models for production and transfer lines},
journal = {European Journal of Operational Research},
volume = {92},
number = {1},
pages = {1-27},
year = {1996},
%issn = {0377-2217},
%doi = {https://doi.org/10.1016/0377-2217(95)00378-9},
%url = {https://www.sciencedirect.com/science/article/pii/0377221795003789},
author = {H.T. PAPADOPOULOS and C. HEAVEY},
%keywords = {Queueing, Production line, Transfer line},

}

@INPROCEEDINGS{DiscreteEvent,  
author={VAIDYANATHAN, B.S. and MILLER, D.M. and PARK, Y.H.},  
booktitle={1998 Winter Simulation Conference. Proceedings},   
title={Application of discrete event simulation in production scheduling},   
year={1998},  
volume={2},  
number={},  
%pages={965-971 vol.2},  doi={10.1109/WSC.1998.745800}
}

@book{SDEIto:,
series = {Universitext},
%abstract = {This book gives an introduction to the basic theory of stochastic calculus and its applications. Examples are given throughout the text, in order to motivate and illustrate the theory and show its importance for many applications in e.g. economics, biology and physics. The basic idea of the presentation is to start from some basic results (without proofs) of the easier cases and develop the theory from there, and to concentrate on the proofs of the easier case (which nevertheless are often sufficiently general for many purposes) in order to be able to reach quickly the parts of the theory which is most important for the applications. For the 6th edition the author has added further exercises and, for the first time, solutions to many of the exercises are provided. Apart from several minor corrections and improvements, based on useful comments from readers and experts, the most important change in the corrected 5th printing of the 6th edition is in Theorem 10.1.9, where the proof of part b has been corrected and rewritten},
%isbn = {9783642143946},
year = {2003},
title = {Stochastic Differential Equations : An Introduction with Applications},
%language = {eng},
address = {Berlin, Heidelberg},
author = {ØKSENDAL, B.},
%keywords = {Differential equations, Partial; Systems theory; Mathematical optimization; Distribution (Probability theory); Economics; Mathematics},
}

@book{Klenke,
series = {Universitext},
%isbn = {9783030564025},
year = {2020},
title = {Probability Theory : A Comprehensive Course},
edition = {3rd ed. 2020.},
%language = {eng},
%address = {Cham},
author = {KLENKE, A.},
%keywords = {Probabilities; Measure theory; Dynamics; Ergodic theory; Vibration; Mathematical physics},
}

@article{Festa,
author = {FESTA, A. and G\"OTTLICH, S. and PFIRSCHING, M.},
year = {2018},
%month = {02},
pages = {},
title = {A model for a network of conveyor belts with discontinuous speed and capacity},
volume = {14},
journal = {Networks and Heterogeneous Media},
%doi = {10.3934/nhm.2019016}
}

@book{LeVeque, 
place={Cambridge}, 
series={Cambridge Texts in Applied Mathematics}, 
title={Finite Volume Methods for Hyperbolic Problems}, 
%DOI={10.1017/CBO9780511791253}, 
publisher={Cambridge University Press}, 
author={LEVEQUE, R. J.}, 
year={2002}, 
collection={Cambridge Texts in Applied Mathematics}
}

@article{Claudel2020b,
author = {SIMONI, M. D. and CLAUDEL, C. G.},
title = {A Fast Lax–Hopf Algorithm to Solve the Lighthill\\–Whitham–Richards Traffic Flow Model on Networks},
journal = {Transportation Science},
volume = {54},
number = {6},
pages = {1516-1534},
year = {2020},
%doi = {10.1287/trsc.2019.0951},

%URL = { 
%        https://doi.org/10.1287/trsc.2019.0951
%    
%},
%eprint = { 
%        https://doi.org/10.1287/trsc.2019.0951
%    
}
\end{document}